\documentclass[a4paper,11pt]{amsart}

\usepackage{amsmath,amssymb,amsthm,mathtools}
\usepackage{float}
\usepackage{color}
\usepackage[colorlinks=true,
linkcolor=blue,
citecolor=blue,
urlcolor=blue,
filecolor=blue,
menucolor=blue,
runcolor=blue]{hyperref}
\usepackage{enumitem}
\usepackage{diagbox}
\usepackage{makecell}
\allowdisplaybreaks

\numberwithin{equation}{section}
\theoremstyle{plain}
\newtheorem{thm}{Theorem}[section]

\newtheorem{lem}[thm]{Lemma}
\newtheorem{cor}[thm]{Corollary}

\makeatletter

\allowdisplaybreaks[3]

\DeclareMathOperator{\vol}{Vol}

\makeatother

\title[Pinching rigidity theorems for normal scalar curvature]
{Pinching rigidity theorems for normal scalar curvature}

\author[J. Q. Ge]{Jianquan Ge}
\address{$^{}$School of Mathematical Sciences, Laboratory of Mathematics and Complex Systems, Beijing Normal University, Beijing 100875, P. R. CHINA.}
\email{jqge@bnu.edu.cn}

\author[F. G. Li]{Fagui Li}
\address{Frontier Interdisciplinary Domain, Beijing Institute of Technology, Zhuhai, Guangdong 519088, P. R. CHINA.}
\email{lifagui@bitzh.edu.cn}

\author[Y. H. Zhang]{Yunheng Zhang$^{*}$}
\address{$^{*}$School of Mathematical Sciences, Laboratory of Mathematics and Complex Systems, Beijing Normal University, Beijing 100875, P. R. CHINA.}
\email{yunheng@mail.bnu.edu.cn}

\subjclass[2020]{53C20, 53C24, 53C42}
\keywords{minimal submanifolds, rigidity theorem, normal scalar curvature.}
\thanks{$^{*}$ the corresponding author.}
\thanks{J. Q. Ge is partially supported by NSFC (No. 12571049) and the Fundamental Research Funds for the Central Universities.}
\thanks{F. G. Li is partially supported by  NSFC (No. 12271040, 12501061) and Research Start up Funding of Beijing Institute of Technology (No. 5640011253301).}
\begin{document}
	\begin{abstract}
	Let $M^n$ be an $n$-dimensional closed minimal submanifold immersed in the unit sphere $\mathbb{S}^{n+m}$. Denote by $S$ and $\rho^{\perp}$ the squared norm of the second fundamental form and the normal scalar curvature of $M^n$, respectively. 
	Let $\{A^{\alpha}\}_{\alpha=n+1}^{n+m}$ be the shape operators of $M^n$   with respect to a local orthonormal normal frame. 
Denote by 
$\lambda_{1}$ 
 the largest eigenvalue of the positive semi-definite symmetric matrix
	 $\mathcal{A}=(\langle A^{\alpha},A^{\beta}\rangle)_{m\times m}$.
	 We show that if $\lambda_{1}\leqslant n$ and $\rho^{\perp}\leqslant
\left[{\sqrt{2}n(n-1)}\right]^{-1}
	 \mathop{\inf}\limits_{p\in M}(n-\lambda_{1})(p)$, then 
	  $\rho^{\perp}\equiv 0$, which means the normal bundle of $M^n$ is flat, and further we give the classification of $M^n$.   
	\end{abstract}
	
	\maketitle
	
	\section{Introduction}
	The study of pinching phenomena is a significant topic in differential geometry. In 1968, Simons \cite{Sim} proved that  \emph{if $M^n$ is an $n$-dimensional closed minimal submanifold in the unit sphere $\mathbb{S}^{n+m}$, whose squared norm of the second fundamental form satisfies $0\leqslant S\leqslant \frac{n}{2-\frac{1}{m}}$, then $S\equiv 0$ and $M$ is the great sphere, or $S\equiv \frac{n}{2-\frac{1}{m}}$.} Subsequently, Chern-do Carmo-Kobayashi \cite{CDK} classified the submanifold on which $S\equiv \frac{n}{2-\frac{1}{m}}$ to be only the Clifford minimal hypersurfaces and the Veronese surface in $\mathbb{S}^4$. Notably, Magliaro et al. \cite{MMRS} extended this rigidity result to a complete minimal submanifold. Afterwards, Li-Li \cite{1992 LiLi} and Chen-Xu \cite{CX93} improved Simons' pinching constant to $\frac{2n}{3}$ for the case $m\geqslant 2$. In particular, for $m=1$, the celebrated Chern conjecture (cf. \cite{Chang 1993,Deng Gu and  Wei  2017,Qi Ding  Y.L. Xin. 2011,Li Lei Hongwei Xu  Zhiyuan Xu 2017,Peng and Terng1 1983,Cheng Qing Ming and H Yang 1998}, etc.) asserts that \emph{a closed minimal hypersurface $M^n$ with constant scalar curvature in $\mathbb{S}^{n+1}$ is isoparametric.}
	A recent important development by Tang-Wei-Yan \cite{T-W-Y} and Tang-Yan \cite{T-Y} showed that \emph{$M^n$ is isoparametric if it has constant $r$-th mean curvature $(r=1,...,n-1) $ and nonnegative scalar curvature,} which generalized the result of Almeida and Brito \cite{A-B} for $n=3$ and strongly supported the Chern conjecture.
	Please see the excellent and detailed surveys on this topic 
	by   Scherfner-Weiss-Yau \cite{M Scherfner S Weiss and  Yau 2012} and Ge-Tang \cite{Ge Tang 2012}, etc.
	
  The renowned DDVV conjecture \cite{DDVV}  was independently proved by Ge-Tang \cite{GT} and by Lu \cite{Lu}. Using a generalized DDVV inequality involving commutators of shape operators, Lu \cite{Lu} 
	proved that \emph{for a closed, minimally immersed submanifold $M^n$ in the unit sphere
		$\mathbb{S}^{n+m}$, if $0\leqslant S+\lambda_{2}\leqslant n$, then $M^n$ is totally geodesic, or is a Clifford torus in $\mathbb{S}^{n+1}\subset \mathbb{S}^{n+m}$, or is a Veronese surface in $\mathbb{S}^{4}\subset \mathbb{S}^{2+m}$, where $\lambda_{2}$ is the second largest eigenvalue of the positive semi-definite symmetric matrix $\mathcal{A}=(\langle A^{\alpha},A^{\beta}\rangle)_{m\times m}$ and $\left\lbrace A^{\alpha}\right\rbrace _{\alpha=n+1}^{n+m}$ are the shape operators of $M^n$ with respect to a given normal orthonormal frame.} The above pinching theorem generalized the earlier pinching theorems established by Simons \cite{Sim}, Chern, do Carmo, and Kobayashi \cite{CDK}, Yau \cite{Yau 1974,Yau 1975}, Shen \cite{YBShen 1989}, as well as Li and Li \cite{1992 LiLi}, etc. 
		For further details on the DDVV-type inequalities, we refer the reader to \cite{GLTZ,GLZ}, etc.
		
	Recently, Ge-Tao-Zhou \cite{GTZ} studied a class of austere submanifold and showed that  \emph{$M$ is either a totally geodesic submanifold or the Clifford torus $\mathbb{S}^{p}(\sqrt{\frac{1}{2}})\times \mathbb{S}^{p}(\sqrt{\frac{1}{2}})$ with $n=2p$ if $S\leqslant n$.} Moreover, for compact submanifolds in Euclidean and spherical space forms satisfying a Ricci curvature pinching condition, Ge-Tao-Zhou \cite{GTZ2} established that \emph{such a submanifold is either homeomorphic to the n-sphere or isometric to the Clifford torus.}
	Under some slight inequality conditions involving the normal scalar curvature, Ding-Ge-Li-Yang \cite{DGLY} also obtained some pinching theorems of minimal surfaces. In particular, they proved that \emph{for a closed minimal surface $M^{2}$ immersed in the unit sphere $\mathbb{S}^{2+m}$, if the normal bundle of $M$ is flat and $S+\lambda_{2}>2$, then $\mathop{\max}\limits_{p\in M}(S+\lambda_{2})(p)>\frac{20}{9}$.}
	
	In this paper, for the sake of clarity, we let $\lambda_{1}\geqslant\lambda_{2}\geqslant\cdots\geqslant\lambda_{m}$ be the eigenvalues of the positive semi-definite symmetric matrix $\mathcal{A}=(\langle A^{\alpha},A^{\beta}\rangle)_{m\times m}$.  The normal scalar curvature $\rho^{\perp}$
	is defined by
	\[\rho^{\perp}=\frac{1}{n(n-1)}|R^\perp|=\frac{2}{n(n-1)}\left(\sum_{1=i<j}^{n}\sum_{1=r<s}^{m}\left\langle R^{\perp}(e_{i},e_{j})\xi_{r},\xi_{s}\right\rangle^2\right)^{\frac{1}{2}},\]
	where $\{e_1,\ldots, e_n\}$ is an orthonormal basis of the tangent space, $\{\xi_{1},\ldots,\xi_{m}\}$ is an orthonormal basis of the normal space, and $R^\perp$ is the curvature tensor of the normal bundle.
	The following new pinching rigidity theorems and classification results of $\lambda_{1}$  and $\rho^{\perp}$  can be obtained. 
\begin{thm}\label{thm S0n p=0}
	Let $M^n$ be an $n$-dimensional closed minimal submanifold immersed in the unit sphere $\mathbb{S}^{n+m}$. If $\lambda_{1}\leqslant n$ and $\rho^{\perp}\leqslant\frac{1}{\sqrt{2}n(n-1)}\mathop{\inf}\limits_{p\in M}(n-\lambda_{1})(p)$, then  $M^n$ must be one of the following:
	\begin{enumerate}
	\item[\rm (1)]   the great sphere with $S\equiv 0$;
		\item[\rm (2)]  the product $\prod\limits_{i=1}^{r+1}\mathbb{S}^{n_{i}}(\sqrt{\frac{n_{i}}{n}})$, where $\sum\limits_{i=1}^{r+1}n_{i}=n$, $1\leqslant r\leqslant m$. Moreover, $M^n$ satisfies the following properties:  (i) $\rho^{\perp}=0$; (ii) $\lambda_{1}=n$; (iii) $S=rn$.
		\end{enumerate}
\end{thm}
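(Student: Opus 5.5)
The plan is to use Simons' formula for the Laplacian of $S$, or equivalently of $\frac12|h|^2$, on a closed minimal submanifold of $\mathbb{S}^{n+m}$, and integrate it over $M$. Recall
\[
\frac12\Delta S=|\nabla h|^2+nS-\sum_{\alpha,\beta}\langle A^\alpha,A^\beta\rangle^2-\sum_{\alpha,\beta}|[A^\alpha,A^\beta]|^2 .
\]
Diagonalize $\mathcal{A}$ pointwise by rotating the normal frame, so that $\langle A^\alpha,A^\beta\rangle=\lambda_\alpha\delta_{\alpha\beta}$ and $S=\sum_\alpha\lambda_\alpha$. Then $\sum\langle A^\alpha,A^\beta\rangle^2=\sum_\alpha\lambda_\alpha^2$. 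The commutator term is exactly the squared norm of the normal curvature, since $\langle R^\perp(e_i,e_j)\xi_\alpha,\xi_\beta\rangle=\langle[A^\alpha,A^\beta]e_i,e_j\rangle$ by the Ricci equation. With the normalization in the definition of $\rho^\perp$ this gives
\[
\sum_{\alpha,\beta}|[A^\alpha,A^\beta]|^2=2\sum_{\alpha<\beta}\sum_{i,j}\langle[A^\alpha,A^\beta]e_i,e_j\rangle^2=4\cdot\frac{n^2(n-1)^2}{4}(\rho^\perp)^2\cdot 2=2n^2(n-1)^2(\rho^\perp)^2 .
\]
The constant has to be checked carefully: $\sum_{i<j}\sum_{r<s}=\frac14\sum_{i,j}\sum_{r,s}$. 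The hypothesis uses $\sqrt2$, so the target is exactly $\sum|[A^\alpha,A^\beta]|^2\le \sqrt2\,n(n-1)\rho^\perp\cdot|R^\perp|$-type quantity.

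The key step is to bound the commutator term by something linear in $S$ times $\rho^\perp$, rather than quadratically. Writing $|[A^\alpha,A^\beta]|^2=\langle [A^\alpha,A^\beta],[A^\alpha,A^\beta]\rangle$ and using Cauchy--Schwarz, I would aim to show
\[
\sum_{\alpha,\beta}|[A^\alpha,A^\beta]|^2\le \sqrt2\,n(n-1)\rho^\perp\, S .
\]
One possible argument: $\sum_{\alpha,\beta}|[A^\alpha,A^\beta]|^2=-\sum_{\alpha,\beta}\mathrm{tr}\big([A^\alpha,A^\beta]\,[A^\alpha,A^\beta]\big)$, and one factor $[A^\alpha,A^\beta]=R^\perp_{\alpha\beta}$ can be peeled off. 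This gives something like $2\sum_{\alpha,\beta}\langle R^\perp_{\alpha\beta},A^\alpha A^\beta\rangle$, which is bounded by $|R^\perp|\cdot(\sum_{\alpha,\beta}|A^\alpha A^\beta|^2)^{1/2}$ with $\sum|A^\alpha A^\beta|^2\le S^2$, or more sharply using $\sum_{\alpha,\beta}|A^\alpha|^2|A^\beta|^2$. This yields $\lesssim n(n-1)\rho^\perp S$ up to the constant $\sqrt2$ that must be tuned. This is the main obstacle: getting exactly the constant $\sqrt2 n(n-1)$ probably requires using the antisymmetry of $R^\perp_{\alpha\beta}$ to replace $A^\alpha A^\beta$ by $\frac12[A^\alpha,A^\beta]$ combined with the estimate $|[A,B]|^2\le 2|A|^2|B|^2$ (Böttcher–Wenzel).

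Granting that bound, we get $\sum_\alpha\lambda_\alpha^2\le\lambda_1S$, hence
\[
\frac12\Delta S\ge|\nabla h|^2+S\big(n-\lambda_1-\sqrt2 n(n-1)\rho^\perp\big)\ge|\nabla h|^2+S\big(n-\lambda_1-\inf_M(n-\lambda_1)\big)\ge |\nabla h|^2 .
\]
Integrating over closed $M$ forces $\nabla h\equiv0$ and equality throughout. In particular $S(n-\lambda_1-\sqrt2n(n-1)\rho^\perp)\equiv0$ and the commutator estimate is an equality. If $S\equiv0$ (by parallelism $S$ is constant), we get case (1). Otherwise, equality in the commutator estimate together with equality in $\sum\lambda_\alpha^2=\lambda_1 S$ implies that all nonzero $\lambda_\alpha$ equal $\lambda_1$. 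I would then argue that equality in Böttcher–Wenzel plus the pinching forces $\rho^\perp=0$: if $\rho^\perp\neq0$, equality would require $\rho^\perp$ to be constant $=\inf(n-\lambda_1)/(\sqrt2n(n-1))$ and $\lambda_1$ constant, and the equality configuration of the commutator bound fails the condition $\sum\lambda_\alpha^2=\lambda_1S$ compatibly, which is a case check. Hence $\rho^\perp=0$ and $\lambda_1=n$. The shape operators then commute and are simultaneously diagonalizable with parallel second fundamental form and flat normal bundle. The classification of such submanifolds (products of spheres, via the standard results of Yau / Ferus-type) gives $\prod_{i=1}^{r+1}\mathbb{S}^{n_i}(\sqrt{n_i/n})$, with $r$ the number of nonzero $\lambda_\alpha$, each equal to $n$. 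This gives $S=rn$ and $r\le m$.
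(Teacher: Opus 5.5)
Your approach is correct and takes a genuinely different route from the paper's, and a much shorter one. The step you flag as the main obstacle is in fact immediate, but a few details need fixing.

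(i) Your normalization has a spurious factor $2$. Since $\sum_{i<j}\sum_{r<s}=\frac14\sum_{i,j}\sum_{r,s}$, one has $\rho^{\perp}_0:=\sum_{\alpha,\beta}|[A^{\alpha},A^{\beta}]|^2=n^2(n-1)^2(\rho^{\perp})^2$, i.e. $\sqrt{\rho^{\perp}_0}=n(n-1)\rho^{\perp}$, as in the paper.

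(ii) The key bound needs no peeling. Write $\rho^{\perp}_0=\sqrt{\rho^{\perp}_0}\cdot\sqrt{\rho^{\perp}_0}$ and estimate one factor by summing B\"ottcher--Wenzel over $\alpha,\beta$, which gives $\rho^{\perp}_0\leqslant 2S^2$. This yields exactly $\rho^{\perp}_0\leqslant\sqrt2\,n(n-1)\rho^{\perp}S$; under your incorrect normalization you would have needed DDVV here. It is cleaner to use DDVV anyway, $\sqrt{\rho^{\perp}_0}\leqslant S$, which together with the hypothesis gives pointwise
\[
nS-\|\mathcal{A}\|^2-\rho^{\perp}_0\geqslant S\big(n-\lambda_{1}-n(n-1)\rho^{\perp}\big)\geqslant\big(1-\tfrac{1}{\sqrt2}\big)(n-\lambda_{1})S\geqslant 0 .
\]

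(iii) Your ``case check'' for equality then becomes trivial. Integrating Simons' formula gives $\nabla h\equiv0$ and $(n-\lambda_1)S\equiv0$. Then $S$ is constant, and if $S>0$ then $\lambda_1\equiv n$, so $\inf_M(n-\lambda_1)=0$ and the hypothesis itself forces $\rho^{\perp}\equiv0$. If you keep the B\"ottcher--Wenzel version instead, equality with $\rho^{\perp}_0>0$ would force $\rho^{\perp}_0=2S^2$. That is impossible for $S>0$, because the $\alpha=\beta$ terms give $\rho^{\perp}_0\leqslant 2(S^2-\sum_\alpha|A^\alpha|^4)$.

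(iv) In the classification, derive the number $k$ of sphere factors rather than assume it. The Xu--Wang relation $\sum_\alpha(\lambda_i^\alpha)^2=\frac{n}{n_i}-1$ gives $S=(k-1)n$, so $S=rn$ yields $k=r+1$.

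The paper works in two stages.
\begin{itemize}
\item It first proves flatness via a separate Bochner formula for $\frac14\Delta\rho^{\perp}_0$. It bounds the terms by $\lambda_1\rho^{\perp}_0$, $2S\rho^{\perp}_0$ and $|\nabla h|^2\sqrt{\rho^{\perp}_0}$, integrates, and inserts the integrated Simons identity with $C=\max\rho^{\perp}_0$. This leaves the alternative $\rho^{\perp}_0\equiv\frac12(n-\lambda_1)^2\neq0$, which it excludes using the Li--Li/Chen--Xu gap $S\geqslant\frac{2n}{3}$ and a quadratic inequality in $\lambda_1$.
\item Only afterwards does it integrate Simons' formula again, now with $\rho^{\perp}_0=0$, to get $\nabla h\equiv0$ and $\lambda_1\in\{0,n\}$. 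It then applies the same Yau/Xu--Wang classification you invoke.
\end{itemize}

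Your route absorbs the normal-curvature term pointwise into $(n-\lambda_1)S$. It obtains $\nabla h\equiv0$, flatness and $\lambda_1=n$ from a single integration, with no Bochner formula for $\rho^{\perp}_0$ and no appeal to Li--Li/Chen--Xu. It even works under the weaker pointwise hypothesis $\sqrt2\,n(n-1)\rho^{\perp}\leqslant n-\lambda_1$. What the paper's heavier machinery buys is the integral inequality for $\rho^{\perp}_0$, which it reuses in the corollaries on constant $\rho^{\perp}$.
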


By Theorem \ref{thm S0n p=0}, we have the following theorem.
  \begin{thm}\label{spe}
	Let $M^n$ be an $n$-dimensional closed minimal submanifold immersed in the unit sphere $\mathbb{S}^{n+m}$. If $ S\leqslant n$ and $\rho^{\perp}\leqslant\frac{1}{\sqrt{2}n(n-1)}\mathop{\inf}\limits_{p\in M}(n-\lambda_{1})(p)$, then $M^n$ must be one of the following:
\begin{enumerate}
\item[\rm (1)]   the great sphere with $S\equiv 0$; 
\item[\rm (2)]  the Clifford torus $\mathbb{S}^{k}\Big(\sqrt\frac{k}{n}\Big)\times \mathbb{S}^{n-k}\Big(\sqrt\frac{n-k}{n}\Big)$ with $S\equiv n$ and $1\leqslant k\leqslant n-1$. 
\end{enumerate}
  \end{thm}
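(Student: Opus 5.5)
The plan is to derive Theorem \ref{spe} directly from Theorem \ref{thm S0n p=0}, and then pick out which of the products allowed there can satisfy $S\leqslant n$.

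\emph{Reduction to Theorem \ref{thm S0n p=0}.} Since $\mathcal{A}=(\langle A^{\alpha},A^{\beta}\rangle)$ is positive semi-definite, all its eigenvalues are nonnegative. Its trace is $\sum_\alpha |A^\alpha|^2=S$. Hence
\[
\lambda_1\leqslant \lambda_1+\lambda_2+\cdots+\lambda_m=S\leqslant n
\]
holds pointwise, so the hypothesis $\lambda_1\leqslant n$ of Theorem \ref{thm S0n p=0} is satisfied. The pinching hypothesis on $\rho^\perp$ is literally the same in both theorems. Theorem \ref{thm S0n p=0} therefore applies, and $M^n$ is either the great sphere with $S\equiv 0$, or a product $\prod_{i=1}^{r+1}\mathbb{S}^{n_i}(\sqrt{n_i/n})$ with $1\leqslant r\leqslant m$ and $S=rn$.

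\emph{Selecting the products.} In the second case the assumption $S\leqslant n$ gives $rn\leqslant n$, so $r=1$. The product then has exactly two factors, $\mathbb{S}^{k}(\sqrt{k/n})\times\mathbb{S}^{n-k}(\sqrt{(n-k)/n})$ with $n_1=k$ and $n_2=n-k$. Each factor has positive dimension, so $1\leqslant k\leqslant n-1$. This is the Clifford torus, and it has $S\equiv n$. It sits as a minimal hypersurface in a totally geodesic $\mathbb{S}^{n+1}\subset\mathbb{S}^{n+m}$.

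\emph{Consistency check.} For this Clifford torus the normal bundle is flat, so $\rho^\perp=0$. Only one normal direction is nonzero, so $\lambda_1=S=n$ and the right-hand side of the pinching condition is $0$. The condition is therefore satisfied, and both alternatives in the conclusion genuinely occur.

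There is no real obstacle once Theorem \ref{thm S0n p=0} is available. The only points needing care are the trace identity $\operatorname{tr}\mathcal{A}=S$ combined with nonnegativity of the eigenvalues, which gives $\lambda_1\leqslant S$, and reading off $r=1$ from $S=rn\leqslant n$.
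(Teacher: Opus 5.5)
Your proposal is correct and follows the paper's own argument: you use $\lambda_1\leqslant \lambda_1+\cdots+\lambda_m=S\leqslant n$ to invoke Theorem \ref{thm S0n p=0}, and then $S=rn\leqslant n$ forces $r=1$, which leaves only the two-factor product, i.e.\ the Clifford torus. Your write-up spells out the $r=1$ step and adds a consistency check, both of which the paper compresses into a single line.
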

  In particular, in the case that $\rho^{\perp}$ is constant, we prove the following corollaries:
  \begin{cor}\label{constant}
  	Let $M^n$ be an $n$-dimensional closed minimal submanifold immersed in the unit sphere $\mathbb{S}^{n+m}$ with constant normal scalar curvature $\rho^{\perp}$. If $$\delta=\left(\int_{M}(3S-n) dM\right)^2\\-4\vol(M)\int_{M}S(S-n)dM\geqslant 0, $$ then
    one of the following two cases must occur:
  		\begin{enumerate}
  			\item[\rm (1)]  	$\rho^{\perp}\leqslant\frac{\int_{M}(n-3S)dM-\sqrt{\delta}}{2n(n-1)\vol(M)}$;
  			\item[\rm (2)]   $\rho^{\perp}\geqslant\frac{\int_{M}(n-3S)dM+\sqrt{\delta}}{2n(n-1)\vol(M)}.$
  		\end{enumerate}
  	\end{cor}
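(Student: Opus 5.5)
The plan is to reduce the corollary to an integral inequality that is quadratic in the constant $x:=n(n-1)\rho^{\perp}$, and then read off the two cases from the sign of that quadratic. The roots of
\[
q(x)=\vol(M)\,x^{2}-\Big(\int_M (n-3S)\,dM\Big)x+\int_M S(S-n)\,dM
\]
are exactly
\[
x_{\pm}=\frac{\int_M(n-3S)\,dM\pm\sqrt{\delta}}{2\vol(M)},
\]
and the discriminant is precisely $\delta$. So the statement follows once I show $q(x)\geqslant 0$ for $x=n(n-1)\rho^{\perp}$. Since $\rho^{\perp}$ is constant, $q(x)$ equals
\[
\int_M\big(x^{2}+(3S-n)x+S(S-n)\big)\,dM .
\]
When $\delta\geqslant 0$ the roots are real, and $q(x)\geqslant0$ forces $x\leqslant x_-$ or $x\geqslant x_+$. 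Dividing by $n(n-1)$ gives cases (1) and (2).

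To obtain $\int_M\big(x^{2}+(3S-n)x+S(S-n)\big)dM\geqslant 0$, I will start from Simons' identity for minimal submanifolds of $\mathbb{S}^{n+m}$:
\[
\tfrac12\Delta S=|\nabla h|^{2}+nS-\sum_{\alpha,\beta}|[A^{\alpha},A^{\beta}]|^{2}-\sum_{\alpha}\lambda_{\alpha}^{2}.
\]
Here I diagonalize $\mathcal A$ by rotating the normal frame, so that $\sum_{\alpha,\beta}\langle A^\alpha,A^\beta\rangle^2=\sum\lambda_\alpha^2$. By the Ricci equation, $\sum_{\alpha,\beta}|[A^{\alpha},A^{\beta}]|^{2}=|R^\perp|^2$, which is a constant multiple of $x^{2}$ (I would pin down the normalization against the definition of $\rho^{\perp}$). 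Integrating over the closed manifold $M$ kills the Laplacian and gives
\[
\int_M\Big(\sum_{\alpha,\beta}|[A^{\alpha},A^{\beta}]|^{2}+\sum_\alpha\lambda_\alpha^{2}-nS\Big)dM=\int_M|\nabla h|^{2}dM\geqslant0 .
\]

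The remaining step is to bound the algebraic terms $\sum_{\alpha,\beta}|[A^\alpha,A^\beta]|^2+\sum_\alpha\lambda_\alpha^2$ pointwise from above by $x^{2}+3Sx-nx+S^{2}$. Once that holds, the integrated inequality above becomes $q(x)\geqslant 0$. For this bound I plan to reuse the pointwise estimate from the proof of Theorem \ref{thm S0n p=0}, which controls $\sum\lambda_\alpha^{2}$ and the commutator term through $\lambda_1$ and $|R^\perp|$. The ingredients I would combine are:
\begin{itemize}
\item $\sum\lambda_\alpha^{2}\leqslant \lambda_1 S\leqslant S^{2}$;
\item the DDVV inequality of \cite{GT,Lu}, giving $|R^{\perp}|\lesssim S$;
\item the generalized commutator version of DDVV due to Lu, to produce the mixed term $3Sx$.
\end{itemize}

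I expect this last pointwise comparison to be the main obstacle. The difficulty is getting exactly the coefficients $1,\,3S-n,\,S(S-n)$, including the $-nx$ term. If the direct estimate does not close, my fallback is to rewrite the Simons inequality from the proof of Theorem \ref{thm S0n p=0} in the form $\int_M (n-\lambda_1-\sqrt{2}\,x)(\cdots)\,dM\leqslant 0$. I would then substitute $\lambda_1\leqslant S$ and expand into a quadratic in $x$, using the constancy of $\rho^{\perp}$ to pull $x$ out of the integrals.
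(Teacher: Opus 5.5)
Your reduction is fine. With $x=n(n-1)\rho^{\perp}=\sqrt{\rho^{\perp}_0}$ constant, everything follows from $\int_M\big(x^2+(3S-n)x+S(S-n)\big)dM\geqslant 0$, and your root analysis is correct. The gap is in how you get this inequality.

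\textbf{Why your main route fails.} Integrating the Simons identity only gives $\int_M(x^2+\|\mathcal{A}\|^2-nS)\,dM\geqslant0$. So you would need the pointwise bound $\|\mathcal{A}\|^2\leqslant S^2+(3S-n)x$, and this is false as an algebraic inequality whenever $x\neq0$. Replace $h$ by $th$, divide by $t^2$ and let $t\to0$: the bound forces $-nx\geqslant0$. Concretely, take $A^{n+1}=t(E_{11}-E_{22})$ and $A^{n+2}=t(E_{12}+E_{21})$, with $E_{ij}$ the matrix units. These give $S=4t^2$, $\|\mathcal{A}\|^2=8t^4$ and $x=4t^2$, and the bound fails for $0<t^2<n/14$.

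No pointwise algebra (DDVV, Lu's inequality, $x\leqslant S$, etc.) can repair this. Mix such points, with suitable volume weights, with points having the same $x$ but very large $S$. The result satisfies the integrated Simons inequality but has $q(x)<0$. The underlying reason is that Simons' identity supplies the term $-nS$, which appears in $q$ as well. Nothing in your plan produces the additional $-nx$.

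Your fallback does not fix this either. The $\sqrt2$ there is the pinching constant of Theorem \ref{thm S0n p=0}, which is not assumed here, and it would not produce the coefficients $1$, $3S-n$, $S(S-n)$.

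\textbf{The missing ingredient.} You need a second Bochner formula, for $\rho^{\perp}_0=\sum_{\alpha,\beta}|[A^{\alpha},A^{\beta}]|^2$ itself (Lemma \ref{Delta}). Its zeroth-order term $n\rho^{\perp}_0$ is where $-nx$ comes from. Bound the other terms from below by:
\begin{enumerate}
\item[(a)] $-\lambda_1\rho^{\perp}_0$ (Lemma \ref{ind});
\item[(b)] $-2S\rho^{\perp}_0$ (B\"ottcher--Wenzel, Lemma \ref{bd});
\item[(c)] $-|\nabla h|^2\sqrt{\rho^{\perp}_0}$ (DDVV applied to the $\nabla_{e_k}A^{\alpha}$, Lemma \ref{DV}).
\end{enumerate}
Integrating gives (\ref{int}): $0\leqslant\int_M\sqrt{\rho^{\perp}_0}\big(|\nabla h|^2+(\lambda_1+2S-n)\sqrt{\rho^{\perp}_0}\big)dM$.

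Constancy of $x$ is used here, not just to pull $x$ out of integrals. Dividing by $x$ gives $0\leqslant\int_M|\nabla h|^2dM+x\int_M(\lambda_1+2S-n)\,dM$; for $x=0$ this is trivial. Only then is the Simons identity used, as an equality, to replace $\int_M|\nabla h|^2dM$ by $\int_M(x^2-nS+\|\mathcal{A}\|^2)\,dM$. Finally, $\|\mathcal{A}\|^2\leqslant S^2$ and $\lambda_1\leqslant S$ give exactly $q(x)\geqslant0$. In particular, the coefficient $3S$ is $\lambda_1+2S$; it does not come from Lu's commutator inequality.
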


\begin{cor}\label{Sconstant}
   Let $M^n$ be an $n$-dimensional closed minimal submanifold immersed in the unit sphere $\mathbb{S}^{n+m}$ with constant $S$ and constant normal scalar curvature $\rho^{\perp}$. If $\rho^{\perp}\leqslant \frac{n-3S+\sqrt{5S^2-2nS+n^2}}{2n(n-1)}$, then $M^n$ must be either the great sphere or the Clifford torus in Theorem \ref{spe}.
  	\end{cor}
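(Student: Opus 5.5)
We will derive this from Corollary \ref{constant} with $S$ constant, and then reduce to Theorem \ref{spe}, or to the equality case of the integral inequality behind Corollary \ref{constant}. Since $S$ is constant, the integrals in Corollary \ref{constant} become $\vol(M)$ times pointwise quantities. The discriminant becomes
\[
\delta=\vol(M)^2\big[(3S-n)^2-4S(S-n)\big]=\vol(M)^2\,(5S^2-2nS+n^2),
\]
and this is positive, because $5S^2-2nS+n^2=5(S-\tfrac n5)^2+\tfrac45n^2>0$. So Corollary \ref{constant} always applies. Write $x=n(n-1)\rho^{\perp}$ and $x_\pm=\tfrac12\big(n-3S\pm\sqrt{5S^2-2nS+n^2}\big)$. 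These are the two roots of
\[
Q(x)=x^2-(n-3S)x+S(S-n).
\]
Corollary \ref{constant} then says $x\leqslant x_-$ or $x\geqslant x_+$; I expect it comes from an integrated inequality $\int_M Q(x)\,dM\geqslant 0$.

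The sign analysis of the roots goes as follows.

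\emph{Case $S>n$.} The product of the roots is $S(S-n)>0$ and their sum is $n-3S<0$, so $x_-<x_+<0$. The hypothesis $x\leqslant x_+$ then contradicts $x\geqslant 0$, so this case cannot occur.

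\emph{Case $S\leqslant n$.} The identity
\[
(5S^2-2nS+n^2)-(n-3S)^2=4S(n-S)\geqslant 0
\]
shows that $x_-\leqslant 0\leqslant x_+$, and $x_-=0$ exactly when $S\in\{0,n\}$. Combining the hypothesis $x\leqslant x_+$ with Corollary \ref{constant}, either $x=0=x_-$, or $x=x_+$.

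\emph{Subcase $x=0=x_-$.} Here $\rho^{\perp}\equiv0$ and $S\in\{0,n\}$. The condition $\rho^{\perp}=0\leqslant \frac{1}{\sqrt2 n(n-1)}\inf(n-\lambda_1)$ holds because $\lambda_1\leqslant S\leqslant n$. Theorem \ref{spe} then gives the great sphere or the Clifford torus.

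\emph{Subcase $x=x_+$ with $0<S<n$.} Here $Q(x)\equiv0$, so equality holds in the integral inequality underlying Corollary \ref{constant}. I would trace back through its proof, which is presumably the Simons-type formula for $\tfrac12\Delta S$ combined with a DDVV-type estimate of the normal curvature term. Equality there should force $|\nabla h|\equiv0$ and equality in the pointwise algebraic estimates. With $\nabla h=0$ and $S$ constant, the Simons identity becomes an exact algebraic relation among $S$, $\sum_{\alpha,\beta}\langle A^\alpha,A^\beta\rangle^2$ and $|R^\perp|^2$. Together with the equality configuration of the DDVV-type inequality (the shape operators are block-diagonal up to rotation, as in Ge--Tang and Lu), this should give either $\rho^\perp=0$, contradicting $x_+>0$, or $S$ equal to one of the values $0$ or $n$, contradicting $0<S<n$. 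The Veronese-type equality configurations would need to be checked against the exact relation $x=x_+$ to rule them out. The subcase $x=x_+$ with $S\in\{0,n\}$ is immediate: then $x_+=0$ for $S=0$ and $x_+=n-S=0$ for $S=n$, so $\rho^\perp=0$ and Theorem \ref{spe} applies as before.

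The main obstacle is this equality analysis. It needs the exact form of the inequality proved for Corollary \ref{constant}, and a check that the only parallel minimal submanifolds realizing equality with $0<S<n$ would violate $x=x_+$.
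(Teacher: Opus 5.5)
\textbf{There is a genuine gap.} Your reduction through Corollary \ref{constant} and the root analysis is essentially right. There are small slips: $x_-=0$ only for $S=0$, $x_+=0$ only for $S=n$, and $x_+=n$ (not $0$) at $S=0$. These are harmless, since $S\equiv0$ is the totally geodesic case.

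The problem is that the proposal stops at the only case with content, $x=x_+>0$ with $0<S<n$, and the plan you sketch for it is aimed at the wrong inequality.

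\textbf{Where the quadratic actually comes from.} The quadratic $Q$ does not come from the Simons formula plus a DDVV bound on the normal curvature term. Simons' identity is linear in $\rho_0^\perp=x^2$, so it cannot produce the term $(3S-n)x$. The quadratic comes from integrating the Bochner formula for $\rho_0^\perp$ (Lemma \ref{Delta}, estimated by Lemmas \ref{ind}, \ref{DV}, \ref{bd}). Because $\rho_0^\perp$ is constant, that formula can be divided by $\sqrt{\rho_0^\perp}$ to give
\[
0\leqslant\int_M\Big(|\nabla h|^2+(\lambda_1+2S-n)\sqrt{\rho_0^\perp}\Big)\,dM .
\]
Then \eqref{nablah}, together with $\|\mathcal A\|^2\leqslant S^2$ and $\lambda_1\leqslant S$, bounds the right side by $\vol(M)\,Q(x)$. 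So $x=x_+$ only yields $\lambda_1=S$ and
\[
\int_M\Big(|\nabla h|^2+(\lambda_1+2S-n)\sqrt{\rho_0^\perp}\Big)\,dM=0 .
\]
This does not by itself force $\nabla h=0$, because the coefficient $\lambda_1+2S-n$ could a priori be negative. No DDVV-equality configuration and no classification of parallel submanifolds enters the argument.

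\textbf{The missing idea.} Use the gap theorem, Corollary \ref{LL}. Since $S$ is a positive constant, $S\geqslant 2n/3$. Hence $\lambda_1+2S-n=3S-n\geqslant n>0$; even without $\lambda_1=S$, one has $2S-n\geqslant n/3>0$. The integrand above is then a sum of nonnegative terms with zero integral, so $\nabla h\equiv0$ and $\rho_0^\perp\equiv0$.
\begin{itemize}
\item For $2n/3\leqslant S<n$ this contradicts $x=x_+>0$.
\item For $S=n$ it gives $\rho^\perp=0$, and Theorem \ref{spe} applies as you said.
\end{itemize}
This is exactly how the paper closes the proof. Without this input, or an equivalent one, your argument leaves the whole range $0<S<n$ unresolved.
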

	\section{Preliminaries}
We consider an $n$-dimensional closed minimal submanifold $M^n$ immersed in the unit sphere $\mathbb{S}^{n+m}$. Let $\{e_1,\ldots,e_{n+m}\}$ be a local orthonormal frame on $T(\mathbb{S}^{n+m})$ such that, when restricted to $M^n$, the vectors $\{e_1,\ldots,e_n\}$  lie in the tangent bundle $T(M)$ and $\{e_{n+1},\ldots,e_{n+m}\}$ lie in the normal bundle $T^{\perp}(M)$, respectively. We shall make use of the following convention on the ranges of the indices:
\begin{equation*}
	1\leqslant i,j,k,\ldots\leqslant n,\quad n+1\leqslant\alpha,\beta,\gamma,\ldots\leqslant n+m,\quad 1\leqslant A,B,C,\ldots\leqslant n+m.
\end{equation*}
Denote by ($\omega_{A}$) the
metric 1-forms and by ($\omega_{AB}$) the connection 1-forms associated with the frame $\{e_1,\ldots,e_{n+m}\}$.
 Define the matrices $A^{\alpha}=(h_{ij}^{\alpha})_{n\times n}$ via \[\omega_{i\alpha}=\sum\limits_{j}h_{ij}^{\alpha}\omega_{j}.\] The second
 fundamental form of $M$ is then given by \[h=\sum\limits_{i,j,\alpha}h_{ij}^{\alpha}\omega_{i}\omega_{j}e_{\alpha}.\]  
 We shall sometimes refer to the components $h_{ij}^{\alpha}$ as the second fundamental form. Moreover, we define  
 \begin{equation*}
 	S:=|h|^2,\qquad
 	\rho^{\perp}_0:=\sum\limits_{\alpha,\beta}|[A^{\alpha},A^{\beta}]|^2.
 \end{equation*}
Therefore, the relation between $\rho^{\perp}_0$ and $\rho^{\perp}$ is
\begin{equation}\label{rel}
 \rho^{\perp}=\frac{1}{n(n-1)}\sqrt{\rho^{\perp}_0}.
\end{equation}
The covariant derivatives $h_{ijk}^{\alpha}$, $h_{ijkl}^{\alpha}$ are defined by
\begin{equation*}
	\begin{aligned}
&\sum_{k}h_{ijk}^{\alpha}\omega_{k}=dh_{ij}^\alpha+\sum_{l}h_{lj}^{\alpha}\omega_{li}+\sum_{l}h_{il}^{\alpha}\omega_{lj}+\sum_{\beta}h_{ij}^{\beta}\omega_{\beta\alpha},\\
&\sum_{l}h_{ijkl}^{\alpha}\omega_{l}=dh_{ijk}^{\alpha}+\sum_{m}h_{mjk}^{\alpha}\omega_{mi}+\sum_{m}h_{imk}^{\alpha}\omega_{mj}+\sum_{m}h_{ijm}^{\alpha}\omega_{mk}+\sum_{\beta}h_{ijk}^{\beta}\omega_{\beta\alpha}.		
	\end{aligned}
\end{equation*}
The Codazzi equation and Ricci formula are
\begin{equation*}
	\begin{aligned}
		&h_{ijk}^{\alpha}=h_{ikj}^{\alpha},\\
		&h_{ijkl}^{\alpha}-h_{ijlk}^{\alpha}=\sum_{p}h_{ip}^{\alpha}R_{pjkl}+\sum_{p}h_{pj}^{\alpha}R_{pikl}-\sum_{\beta}h_{ij}^{\beta}R_{\alpha\beta kl},
	\end{aligned}
\end{equation*}
where
\begin{equation*}
	\begin{aligned}
		&R_{ijkl}=\delta_{ik}\delta_{jl}-\delta_{il}\delta_{jk}+\sum_{\alpha}(h_{ik}^{\alpha}h_{jl}^{\alpha}-h_{il}^{\alpha}h_{jk}^{\alpha}),\\
		&R_{\alpha\beta kl}=\sum_{i}(h_{ik}^{\alpha}h_{il}^{\beta}-h_{il}^{\alpha}h_{ik}^{\beta}).
	\end{aligned}
\end{equation*}
For convenience, we define $|\nabla h|^2=\sum\limits_{\alpha,i,j,k}(h_{ijk}^{\alpha})^2$.
The Laplacian $\Delta h_{ij}^{\alpha}$ of the second fundamental form $h_{ij}^{\alpha}$ is defined by
\[\Delta h_{ij}^{\alpha}=\sum_{k}h_{ijkk}^{\alpha}.\]
The matrix inner product is defined as
\[\langle A,B\rangle={\rm Tr}(AB^{\top}),\]where $B^{\top}$ denotes the transpose of $B$.
For further estimates, we recall the following well-known inequalities and a celebrated pinching theorem due to Li-Li \cite{1992 LiLi} and Chen-Xu \cite{CX93}.

\begin{thm}[{DDVV inequality} \cite{GT, Lu}]
	Let $B_{1},\ldots,B_{m}$ be $n\times n$ real symmetric matrices. Then
	\begin{equation}\label{DDVV}
		\sum_{r,s=1}^{m}\|[B_{r},B_{s}]\|^2\leqslant(\sum_{r=1}^{m}\|B_{r}\|^2)^2,
	\end{equation} 
where $\|\cdot\|^2$ denotes the sum of squares of the entries of a matrix and $[A,B]:=AB-BA$ is the commutator of the matrices $A$ and $B$. 
\end{thm}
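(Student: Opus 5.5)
Since this is the DDVV inequality, quoted from \cite{GT, Lu}, I would follow the variational strategy of those papers rather than look for a short direct argument. Put $F(B_1,\ldots,B_m)=\sum_{r,s}\|[B_r,B_s]\|^2$. Both $F$ and $\sum_r\|B_r\|^2$ are homogeneous of degree $4$ and $2$, so it suffices to show $F\leqslant 1$ on the unit sphere $\sum_r\|B_r\|^2=1$ of $(\mathrm{Sym}_n)^m$. This sphere is compact, so $F$ attains a maximum there.

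First I would record the invariances of $F$ and of $\sum_r\|B_r\|^2$. Both are unchanged under simultaneous conjugation $B_r\mapsto PB_rP^{\top}$ with $P\in O(n)$. Both are also unchanged under orthogonal recombination $B_r\mapsto\sum_s Q_{rs}B_s$ with $Q\in O(m)$, since $\sum_{r,s}\|[B_r,B_s]\|^2$ is the squared norm of the skew tensor $\sum_{r,s}[B_r,B_s]\,e_r\wedge e_s$, which is $O(m)$-invariant. Using the $O(m)$-freedom, I would assume the $B_r$ are mutually orthogonal with $\|B_r\|^2=a_r$ and $a_1\geqslant\cdots\geqslant a_m$. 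This is exactly the eigenvalue normalization of $\mathcal{A}$ used later in the paper.

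Second, I would use the two-matrix case as a building block. For symmetric $A,B$ one has $\|[A,B]\|^2\leqslant 2\|A\|^2\|B\|^2$. This can be proved by diagonalizing $A=\mathrm{diag}(\mu_i)$, writing $\|[A,B]\|^2=\sum_{i\neq j}(\mu_i-\mu_j)^2b_{ij}^2$, and using $(\mu_i-\mu_j)^2\leqslant 2(\mu_i^2+\mu_j^2)$. For $m=2$ this already gives the inequality, since $2\|[B_1,B_2]\|^2\leqslant 4a_1a_2\leqslant(a_1+a_2)^2$.

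The general case needs more than this bound. Summing it over all pairs only gives $F\leqslant 2\big((\sum a_r)^2-\sum a_r^2\big)$, which is too weak. The key step is therefore to analyze a maximizer through the Lagrange equations
\[\sum_s[B_s,[B_s,B_r]]=c\,B_r\qquad (r=1,\ldots,m).\]
Pairing the $r$-th equation with $B_r$ and summing shows that $c$ equals $F$ at the critical point. From here I would argue, following Ge--Tang, that at a maximum all $B_r$ lie in a small number of mutually orthogonal $2\times 2$ blocks, and indeed that one can reduce to three matrices of the form of the Pauli matrices (suitably scaled, symmetric versions). For these the value $F=1$ is checked directly, and the equality case $\|B_1\|=\|B_2\|$, $B_3=0$ is realized.

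The main obstacle is this structural reduction at the maximizer. One has to exclude critical configurations in which the $B_r$ interact in higher-dimensional blocks. My first attempt would be Lu's approach: prove the stronger eigenvalue-weighted inequality $F\leqslant\big(\sum a_r\big)^2$ refined by subtracting suitable terms in $a_2,a_3,\dots$, by induction on $m$. For the inductive step I would write $B_m$ as a small perturbation and compute the second variation of $F$ along the critical set, aiming to show that any critical point not of the $2\times 2$ block type fails the second-order (maximum) condition.
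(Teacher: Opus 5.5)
Everything you actually carry out is correct, but it only covers the routine part. That includes the reduction to the unit sphere, the $O(n)\times O(m)$-invariance, the bound $\|[A,B]\|^2\leqslant 2\|A\|^2\|B\|^2$ for symmetric $A,B$ (which does settle $m=2$), and the Lagrange system $\sum_s[B_s,[B_s,B_r]]=cB_r$ with $c=F$ on the unit sphere. The genuine gap is the case $m\geqslant 3$, which is the whole content of the theorem. Your key sentence says that at a maximum the $B_r$ sit in mutually orthogonal $2\times 2$ blocks and reduce to Pauli-type matrices. That is just the DDVV inequality together with its equality case restated, and you give no argument for it. The fallback is only announced: you do not say which terms in $a_2,a_3,\ldots$ are subtracted, nor how an inductive step in $m$ would close. (Also, only two of the three Pauli matrices are real symmetric, which is why the extremal configuration has $B_3=0$.) The paper does not prove this statement either; it imports it from Ge--Tang and Lu. So what you have written is an outline of where a proof must go, not a proof.

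The missing step is not a local computation, because the Lagrange system has many solutions that are not of $2\times 2$ block type. Let $E_1,\ldots,E_N$ be an orthonormal basis of the traceless symmetric $n\times n$ matrices, $N=\frac{n(n+1)}{2}-1$. A direct computation gives $\sum_s[E_s,[E_s,X]]=nX$ for every traceless symmetric $X$. Hence $B_r=E_r/\sqrt{N}$ (padded with zeros) is a critical point with $F=n/N$, e.g.\ $F=3/5$ for $n=3$. Orthonormal bases of other irreducible Lie triple systems in the space of symmetric matrices give further ones. A second-variation argument therefore presupposes a description of the whole critical set, which you do not have.

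The cited proofs supply a global device instead. For example, in Ge--Tang's proof one writes $B_r=\sum_\alpha b_{r\alpha}\hat E_\alpha$ in an orthonormal basis $\{\hat E_\alpha\}$ of symmetric matrices. Both sides of the inequality depend only on $b^{\top}b$, where $b=(b_{r\alpha})$. Diagonalizing $b^{\top}b$ reduces to matrices $x_\alpha Q_\alpha$ with $\{Q_\alpha\}$ an arbitrary orthonormal basis. One then proves the quadratic-form inequality
\[\sum_{\alpha,\beta}\|[Q_\alpha,Q_\beta]\|^2x_\alpha^2x_\beta^2\leqslant\Big(\sum_\alpha x_\alpha^2\Big)^2\]
by explicit estimates on the commutator norms. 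Some argument of this global kind is what your proposal is missing.
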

\begin{thm}[Böttcher-Wenzel inequality \cite{Lu}] 
\textit{Let $X$, $Y$ be two $n\times n$ complex matrices. Then
\begin{equation}\label{BW}
	\|[X,Y]\|^2\leqslant 2\|X\|^2\|Y\|^2.
\end{equation}}
\end{thm}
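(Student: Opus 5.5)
The plan is to reduce the inequality to a bound on the largest eigenvalue of a positive operator on $M_n(\mathbb{C})$, and then to estimate the quadratic form of that operator. Fix $Y$ and consider the linear map $\mathrm{ad}_Y:X\mapsto [X,Y]$ on $M_n(\mathbb{C})$ with the Hilbert--Schmidt inner product $\langle A,B\rangle={\rm Tr}(AB^{*})$. The claim (\ref{BW}) is equivalent to
\[
\langle T X,X\rangle\leqslant 2\|X\|^2\|Y\|^2 \qquad\text{for all } X,
\]
where $T=\mathrm{ad}_Y^{*}\mathrm{ad}_Y$. Expanding $\|XY-YX\|^2$ gives
\[
\|[X,Y]\|^2=\|XY\|^2+\|YX\|^2-2\,\mathrm{Re}\,{\rm Tr}\bigl(XY X^{*}\!\!\;\, \bigr)^{\!\!}\!\!\;\!\;\!\;\!\;\,
\]
more precisely $\|[X,Y]\|^2=\|XY\|^2+\|YX\|^2-2\,\mathrm{Re}\,{\rm Tr}(X^{*}Y^{*}XY)$.

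Both sides are invariant under $X\mapsto UXU^{*}$, $Y\mapsto UYU^{*}$ for unitary $U$. They are also invariant under replacing $X$ by $X-zI$ on the left, while the right side can only decrease at the optimal $z$. I will therefore normalize $\|X\|=\|Y\|=1$, which is harmless by homogeneity. I will also use the Schur decomposition to assume $X$ is upper triangular.

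The naive estimate $\|[X-zI,Y]\|\leqslant 2\|X-zI\|_{op}\|Y\|$ is not enough. For a $2\times 2$ nilpotent Jordan block it yields the constant $4$ instead of $2$. So the cross term must genuinely be used, and this is where the work lies.

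For the diagonal terms I will write
\[
\|XY\|^2+\|YX\|^2=\langle Y^{*}Y+YY^{*}\!\!\;,\;\cdot\,\rangle
\]
in the form $\|XY\|^2+\|YX\|^2={\rm Tr}\bigl(X^{*}X\,YY^{*}\bigr)+{\rm Tr}\bigl(XX^{*}\,Y^{*}Y\bigr)$. Here $\rho_1=YY^{*}$ and $\rho_2=Y^{*}Y$ are density matrices after normalization (positive, trace one). This turns the two diagonal terms into expectations of $X^{*}X$ and $XX^{*}$ in states built from $Y$. Following Audenaert's variance idea, I will then rewrite the whole expression
\[
\|[X,Y]\|^2
\]
as a sum of \emph{variances} of $X$ with respect to these states, namely quantities of the form ${\rm Tr}(\rho X^{*}X)-|{\rm Tr}(\rho X)|^2$. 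The terms $|{\rm Tr}(\rho X)|^2$ absorb the cross term $2\,\mathrm{Re}\,{\rm Tr}(X^{*}Y^{*}XY)$, using the singular value decomposition $Y=U\Sigma V^{*}$ and Cauchy--Schwarz in the weights $\sigma_i\sigma_j$.

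The key lemma I aim for is a variance bound of the form
\[
\mathrm{Var}_\rho(X)\leqslant \|X\|^2\,(1-\lambda_{\max}(\rho)) \quad\text{or}\quad \mathrm{Var}_\rho(X)\leqslant \|X\|^2 .
\]
Combining the two variance contributions should give the total bound $2\|X\|^2\|Y\|^2$. Equality would occur for the $2\times 2$ example $X=\begin{pmatrix}0&1\\0&0\end{pmatrix}$, $Y=X^{*}$, and I will use this as a sanity check on the constants.

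The main obstacle is exactly this cross-term step. I must show that $-2\,\mathrm{Re}\,{\rm Tr}(X^{*}Y^{*}XY)$ is controlled well enough that the sum with the diagonal terms never exceeds $2$, even though each diagonal term alone can be close to $1$. I will first try the SVD-coordinate computation: with $Y=\Sigma$ diagonal after replacing $X$ by $V^{*}XU$-type conjugations, the cross term becomes $\sum_{i,j}\sigma_i\sigma_j x_{ij}\overline{x'_{ji}}$. I will then apply $2|ab|\leqslant |a|^2+|b|^2$ with weights chosen so that the result telescopes into $2\sum\sigma_i^2\cdot\sum|x_{ij}|^2$. If this direct weighting fails, I will fall back on the variance lemma route described above.
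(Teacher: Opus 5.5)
Your proposal correctly locates the difficulty in the cross term, but it never supplies the estimate for it, and each mechanism you suggest fails. Write $E_{ij}$ for the matrix units, take $\|Y\|=1$, and set $\rho_1=YY^{*}$, $\rho_2=Y^{*}Y$. Your own expansion gives exactly
\[
\|[X,Y]\|^2-\mathrm{Var}_{\rho_1}(X)-\mathrm{Var}_{\rho_2}(X^{*})=|\mathrm{Tr}(\rho_1X)|^2+|\mathrm{Tr}(\rho_2X)|^2-2\,\mathrm{Re}\,\mathrm{Tr}(X^{*}Y^{*}XY).
\]
For $Y=E_{12}$ the right-hand side is $|x_{11}-x_{22}|^2\geqslant 0$. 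So the mean terms add to the cross term instead of absorbing it. Now take $X=\mathrm{diag}(1,-1)/\sqrt2$ and $Y=E_{12}$, an equality case with $\|[X,Y]\|^2=2$. Every variance of $X$ or $X^{*}$ in the states $E_{11},E_{22}$ vanishes. Hence $\|[X,Y]\|^2$ is not a sum of such variances plus a nonpositive term, and the rewriting you plan is false. Your first candidate lemma is also false: $\rho=E_{11}$, $X=E_{21}$ gives $\mathrm{Var}_\rho(X)=1>0=\|X\|^2(1-\lambda_{\max}(\rho))$. The second candidate, $\mathrm{Var}_\rho(X)\leqslant\|X\|^2$, is true but useless without the decomposition.

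The SVD fallback fails for a structural reason. With $Y=U\Sigma V^{*}$ you get $\|[X,Y]\|=\|A\Sigma-\Sigma B\|$, where $A=U^{*}XU$ and $B=V^{*}XV$. Any bound obtained by applying $2|ab|\leqslant t|a|^2+t^{-1}|b|^2$ to entries depends only on the $\sigma_i$ and the moduli $|a_{ij}|,|b_{ij}|$. It is also at least $\sum_{i,j}(\sigma_j|a_{ij}|+\sigma_i|b_{ij}|)^2\geqslant\|A\Sigma+\Sigma B\|^2$. Take $X=\mathrm{diag}(2,-1,-1)/\sqrt6$, which has trace $0$ and $\|X\|=1$, and $Y=\mathrm{diag}(1,0,0)$. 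Then $A=B=X$, $\Sigma=Y$, and $\|A\Sigma+\Sigma B\|^2=8/3>2$. So no choice of weights can produce the constant $2$, even after the normalization $\mathrm{Tr}X=0$. What is missing is an argument that genuinely uses $B=W^{*}AW$ with $W=U^{*}V$ unitary; the sign flip $B\mapsto -B$ destroys exactly this relation. That coupling is the real content of the B\"ottcher--Wenzel theorem, and the published proofs (B\"ottcher--Wenzel, Vong--Jin, Lu, Audenaert) exploit it.

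The paper itself gives no proof; it only cites the result. Its one application, Lemma~\ref{bd}, has $Y=A^{\gamma}$ real symmetric. For normal $Y$ there is a two-line proof. Unitarily diagonalize $Y=\mathrm{diag}(\mu_1,\dots,\mu_n)$, so that $\|[X,Y]\|^2=\sum_{i\neq j}|\mu_i-\mu_j|^2|x_{ij}|^2$. Since $|\mu_i-\mu_j|^2\leqslant 2(|\mu_i|^2+|\mu_j|^2)\leqslant 2\|Y\|^2$, this is at most $2\|X\|^2\|Y\|^2$. For general complex $X,Y$, your plan as written contains no proof.
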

\begin{thm}[\cite{1992 LiLi,CX93}]\label{LiLi}
Let $M^n$ be an $n$-dimensional closed minimal submanifold immersed in the unit sphere $\mathbb{S}^{n+m}$ with $m\geqslant2$.
If $0\leqslant S\leqslant \frac{2}{3}n$ everywhere on $M$,
then $M$ is either a totally geodesic submanifold or a Veronese surface in $\mathbb{S}^{2+m}$.
\end{thm}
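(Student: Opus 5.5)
The plan is the classical Simons-type integral argument: combine Simons' identity with a sharp algebraic estimate for the normal-curvature terms, which here comes from the two matrix inequalities recalled above. For a closed minimal submanifold of $\mathbb{S}^{n+m}$, the Codazzi equation, the Ricci formula and minimality give
\[
\frac12\Delta S=|\nabla h|^2+nS-\sum_{\alpha,\beta}|[A^{\alpha},A^{\beta}]|^2-\sum_{\alpha,\beta}\langle A^{\alpha},A^{\beta}\rangle^2 .
\]
At each point I rotate the normal frame to diagonalize $\mathcal{A}$. Then $\langle A^\alpha,A^\beta\rangle=\sigma_\alpha\delta_{\alpha\beta}$ with $\sigma_\alpha=\|A^\alpha\|^2$, $\sum_\alpha\sigma_\alpha=S$, and the last sum becomes $x:=\sum_\alpha\sigma_\alpha^2$.

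The key algebraic step is
\[
\sum_{\alpha,\beta}|[A^{\alpha},A^{\beta}]|^2+x\leqslant \tfrac32 S^2 .
\]
I get it from two upper bounds on $P:=\sum_{\alpha,\beta}|[A^\alpha,A^\beta]|^2$.
\begin{itemize}
\item By the Böttcher--Wenzel inequality \eqref{BW}, $P\leqslant 2\sum_{\alpha\neq\beta}\sigma_\alpha\sigma_\beta=2(S^2-x)$, so $P+x\leqslant 2S^2-x$.
\item By the DDVV inequality \eqref{DDVV}, $P\leqslant S^2$, so $P+x\leqslant S^2+x$.
\end{itemize}
Hence $P+x\leqslant\min\{2S^2-x,\;S^2+x\}\leqslant\frac32S^2$, with the two bounds crossing at $x=S^2/2$.

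Integrating Simons' identity over the closed manifold $M$ then gives
\[
0\geqslant\int_M|\nabla h|^2\,dM+\int_M S\Bigl(n-\tfrac32S\Bigr)dM .
\]
Under $0\leqslant S\leqslant\frac23n$ both integrands are nonnegative, so they vanish identically. Thus $\nabla h\equiv0$, and at every point either $S=0$ or $S=\frac23n$. Since $\nabla h=0$, $S$ is constant. If $S\equiv0$, $M$ is totally geodesic.

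The remaining case $S\equiv\frac23n$ is the main obstacle: extracting the equality configuration. Equality in the estimate forces the following.
\begin{itemize}
\item $x=S^2/2$ together with equality in both the Böttcher--Wenzel and DDVV inequalities.
\item The condition $x=\sum\sigma_\alpha^2=\frac12(\sum\sigma_\alpha)^2$ combined with Böttcher--Wenzel equality for every pair $\alpha\neq\beta$ with $\sigma_\alpha\sigma_\beta>0$. I would argue this forces exactly two nonzero shape operators $A^{n+1},A^{n+2}$ with $\sigma_{n+1}=\sigma_{n+2}$. A third nonzero one would require pairwise equality in Böttcher--Wenzel, which is incompatible for three mutually orthogonal symmetric matrices; I would try to check this via the known equality characterization in the DDVV inequality.
\item The equality characterization of Böttcher--Wenzel (equivalently of DDVV) for a pair of symmetric matrices then puts them, up to an orthogonal change of tangent frame, in the Chern--do Carmo--Kobayashi normal form: $A^{n+1}=\lambda\,\mathrm{diag}(1,-1,0,\dots,0)$ and $A^{n+2}=\lambda(E_{12}+E_{21})$.
\end{itemize}

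Finally, I would use $\nabla h=0$ and the structure equations, as in Chern--do Carmo--Kobayashi \cite{CDK}, on the frame where all other $h^\alpha_{ij}$ vanish. This gives $n=2$ and $\lambda^2=\frac{1}{3}$, matching $S=4\lambda^2=\frac43=\frac23n$, and identifies $M$ as the Veronese surface in a totally geodesic $\mathbb{S}^4\subset\mathbb{S}^{2+m}$. Here $m\geqslant2$ is exactly what allows two normal directions, and for $m=1$ the argument would instead yield Simons' bound.
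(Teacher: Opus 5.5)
The paper gives no proof of this statement. It is quoted from Li--Li and Chen--Xu and used only as a black box, in Corollary~\ref{LL}. Your argument is correct. It differs from those original proofs mainly in how the key algebraic estimate is obtained.

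Li--Li (1992) prove the matrix inequality $\sum_{\alpha,\beta}|[A^{\alpha},A^{\beta}]|^2+\sum_{\alpha,\beta}\langle A^{\alpha},A^{\beta}\rangle^2\leqslant\frac{3}{2}S^2$ and its equality case by direct linear algebra, building on the Chern--do Carmo--Kobayashi lemma. Pairwise Böttcher--Wenzel alone is not enough for $m\geqslant3$, and the DDVV inequality did not yet exist.

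You get the estimate in two lines by averaging two bounds:
\begin{itemize}
\item the Böttcher--Wenzel bound $P\leqslant 2(S^2-x)$;
\item the DDVV bound $P\leqslant S^2$.
\end{itemize}
The equality configuration then comes from simultaneous equality in both. You can cite the second half of the CDK lemma for this: three symmetric matrices that pairwise attain Böttcher--Wenzel equality must include a zero one. This settles the step you only sketched. Alternatively, the equality case of DDVV (Ge--Tang, Lu) on its own already forces exactly two nonzero, orthogonal shape operators of equal norm in CDK normal form.

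After that point your route coincides with CDK and Li--Li:
\begin{itemize}
\item $\nabla h=0$ gives $\omega_{1a}=\omega_{2a}=0$ for $a\geqslant3$, so $R_{1a1a}=0$; this contradicts the Gauss equation value $R_{1a1a}=1$ unless $n=2$.
\item The normal structure equations give $\lambda^2=\frac{1}{3}$.
\item The first normal bundle is parallel, which identifies the Veronese surface in $\mathbb{S}^4$.
\end{itemize}

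What your version buys: it is shorter, uses exactly the two inequalities the paper recalls in its preliminaries, and gives the slightly finer pointwise bound $P+x\leqslant\min\{2S^2-x,\,S^2+x\}$. What it costs: it rests on the much deeper DDVV theorem, whereas the original argument is elementary and self-contained.

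When you write it up, say explicitly that the CDK-adapted frame can be chosen smoothly. When $S\equiv\frac{2}{3}n$, the first normal space has constant rank $2$. Also, the eigenvalues $\pm\lambda$ of $A^{n+1}$ on $\mathrm{span}\{e_1,e_2\}$ are simple.
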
 
\begin{cor}\label{LL}
Let $M^n$ be an $n$-dimensional closed minimal submanifold immersed in the unit sphere $\mathbb{S}^{n+m}$ with $m\geqslant 1$. If $S>0$, then $\mathop{\max}\limits_{p\in M} S(p)\geqslant \frac{2}{3}n$.
\end{cor}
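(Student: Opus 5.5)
We argue by contradiction: assume $S>0$ somewhere on $M$ and $\max_{p\in M}S(p)<\frac{2}{3}n$. Then $0\leqslant S\leqslant\frac{2}{3}n$ holds everywhere on $M$. The plan is to invoke Theorem \ref{LiLi} and check that each alternative it allows contradicts one of our two assumptions.

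\textbf{Reducing to $m\geqslant 2$.} Theorem \ref{LiLi} needs codimension $m\geqslant 2$. When $m=1$, we compose the immersion with a totally geodesic inclusion $\mathbb{S}^{n+1}\subset\mathbb{S}^{n+2}$. Minimality is preserved. The second fundamental form is unchanged, since the extra normal direction has zero shape operator. Hence $S$ is also unchanged. So we may assume $m\geqslant 2$ throughout.

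\textbf{Case analysis.} Theorem \ref{LiLi} now says that $M$ is either totally geodesic or a Veronese surface in $\mathbb{S}^{2+m}$.
\begin{itemize}
\item If $M$ is totally geodesic, then $S\equiv 0$, contradicting $S>0$.
\item If $M$ is a Veronese surface, then $n=2$ and $S\equiv\frac{4}{3}=\frac{2}{3}n$. This is the standard value from Chern--do Carmo--Kobayashi, since there $S\equiv\frac{n}{2-1/m}$ with $n=m=2$. Then $\max S=\frac{2}{3}n$, contradicting the strict inequality $\max S<\frac{2}{3}n$.
\end{itemize}
Both alternatives are impossible, so $\max_{p\in M}S(p)\geqslant\frac{2}{3}n$.

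The only step needing care is the Veronese case: the contradiction comes from the strictness of the assumed inequality, which is why we must use the exact value of $S$ on the Veronese surface. The codimension-one reduction is routine.

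For $m=1$ one could instead use Simons' theorem directly. Since $\frac{2}{3}n<n$, it gives $S\equiv 0$ or $S\equiv n$, and both are incompatible with our assumptions.
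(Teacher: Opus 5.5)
Your proof is correct. For $m\geqslant 2$ it is the paper's argument: when the paper says the result ``follows directly from Theorem \ref{LiLi}'', it is relying on exactly your observation. The Veronese surface has $S\equiv\frac{4}{3}=\frac{2}{3}n$, so it is ruled out by the strict inequality $\max S<\frac{2}{3}n$, and the totally geodesic case is ruled out by $S>0$.

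The only difference is the case $m=1$. The paper does not embed into higher codimension there. It uses Simons' integral inequality $\int_M S(S-n)\,dM\geqslant 0$, which is the route in your closing remark, and this gives the sharper conclusion $\max S\geqslant n$ in codimension one. Your totally geodesic inclusion $\mathbb{S}^{n+1}\subset\mathbb{S}^{n+2}$ is also valid: the extra normal direction has zero shape operator, so $S$ and minimality are unchanged. It lets you treat every codimension with Theorem \ref{LiLi} alone, at the cost of only obtaining the weaker bound $\frac{2}{3}n$ when $m=1$.
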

\begin{proof}
	When $m=1$, Simons' inequality \cite{Sim} gives \[\int_{M}S(S-n)dM\geqslant0.\]If $S>0$, then \[\mathop{\max}\limits_{p\in M} S(p)\geqslant n>\frac{2}{3}n.\]
	When $m\geqslant 2$, the result follows directly from Theorem \ref{LiLi}.
\end{proof}
	\section{Proof of the main results}
 We first establish several lemmas that will be used in the subsequent estimates.
\begin{lem}\label{Delta}
	Let $M^n$ be an n-dimensional closed minimal submanifold immersed in the unit sphere $\mathbb{S}^{n+m}$. Then we have
	\begin{equation*}
		\begin{aligned}
			\frac{1}{4}\Delta\rho^{\perp}_0=&n\rho^{\perp}_0+\sum_{\alpha,\beta,\gamma}\langle A^{\alpha}, A^{\gamma}\rangle \langle[A^{\beta},A^{\gamma}],[A^{\alpha},A^{\beta}]\rangle-\sum_{\alpha}|\sum_{\gamma}[[A^{\alpha},A^{\gamma}],A^{\gamma}]|^2\\
			&+\sum_{\alpha,\beta}\langle\sum_{k}[\nabla_{e_{k}} A^{\alpha},\nabla_{e_{k}} A^{\beta}],[A^{\alpha},A^{\beta}]\rangle+\frac{1}{2}\sum_{\alpha,\beta}|\nabla [A^{\alpha},A^{\beta}]|^2.
		\end{aligned}
	\end{equation*}
\end{lem}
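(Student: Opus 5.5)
The plan is to compute the Laplacian of each squared commutator norm by the Bochner-type identity and then substitute the Simons-type formula for $\Delta A^{\alpha}$. Write $C_{\alpha\beta}:=[A^{\alpha},A^{\beta}]$, so $\rho^{\perp}_0=\sum_{\alpha,\beta}|C_{\alpha\beta}|^2$. Since $C_{\alpha\beta}$ is a section of $\mathrm{End}(TM)\otimes\Lambda^2 T^{\perp}M$, its covariant derivatives in the full (tangent plus normal) connection obey the Leibniz rule. Hence
\[\nabla_{e_k}C_{\alpha\beta}=[\nabla_{e_k}A^{\alpha},A^{\beta}]+[A^{\alpha},\nabla_{e_k}A^{\beta}],\]
and likewise
\[\Delta C_{\alpha\beta}=[\Delta A^{\alpha},A^{\beta}]+[A^{\alpha},\Delta A^{\beta}]+2\sum_k[\nabla_{e_k}A^{\alpha},\nabla_{e_k}A^{\beta}].\]
From $\frac12\Delta|C|^2=\langle C,\Delta C\rangle+|\nabla C|^2$ we get
\[\tfrac14\Delta\rho^{\perp}_0=\tfrac12\sum_{\alpha,\beta}\langle C_{\alpha\beta},\Delta C_{\alpha\beta}\rangle+\tfrac12\sum_{\alpha,\beta}|\nabla C_{\alpha\beta}|^2.\]
The gradient terms then already give the last two terms of Lemma \ref{Delta}. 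For the remaining part, antisymmetry $C_{\alpha\beta}=-C_{\beta\alpha}$ shows that the two terms containing $\Delta A$ contribute equally after swapping $\alpha$ and $\beta$. So it remains to evaluate $\sum_{\alpha,\beta}\langle [\Delta A^{\alpha},A^{\beta}],C_{\alpha\beta}\rangle$.

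Next I derive the Simons identity in matrix form from the Codazzi equation and the Ricci formula of the preliminaries, using minimality ($\operatorname{tr}A^{\alpha}=0$, so $\sum_k h^{\alpha}_{kki}=0$). The computation starts from $\Delta h^{\alpha}_{ij}=\sum_k h^{\alpha}_{kijk}$, commutes indices, and inserts the Gauss expressions for $R_{ijkl}$ and $R_{\alpha\beta kl}$. It should give
\[\Delta A^{\alpha}=nA^{\alpha}-\sum_{\gamma}\langle A^{\alpha},A^{\gamma}\rangle A^{\gamma}+\sum_{\gamma}[A^{\gamma},[A^{\gamma},A^{\alpha}]],\]
that is, the commutator term equals $\sum_\gamma[[A^{\alpha},A^{\gamma}],A^{\gamma}]$ up to the sign convention I will track carefully. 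Substituting, the three pieces are handled separately:
\begin{itemize}
\item the $nA^{\alpha}$ term gives $n\rho^{\perp}_0$;
\item the term $-\langle A^{\alpha},A^{\gamma}\rangle A^{\gamma}$ gives $-\sum\langle A^{\alpha},A^{\gamma}\rangle\langle [A^{\gamma},A^{\beta}],[A^{\alpha},A^{\beta}]\rangle$, which equals the stated $+\sum\langle A^{\alpha},A^{\gamma}\rangle\langle[A^{\beta},A^{\gamma}],[A^{\alpha},A^{\beta}]\rangle$;
\item the double-commutator term is treated by moving $\operatorname{ad}_{A^{\beta}}$ across the trace inner product.
\end{itemize}
For the third piece, the relevant fact is that for symmetric $B$ one has $\langle [X,B],Y\rangle=\langle X,[Y,B]\rangle$ whenever $X,Y$ are both symmetric or both skew; this follows from cyclicity of the trace. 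Applying it gives
\[\sum_{\beta}\langle[Z^{\alpha},A^{\beta}],[A^{\alpha},A^{\beta}]\rangle=\langle Z^{\alpha},\textstyle\sum_\beta[[A^{\alpha},A^{\beta}],A^{\beta}]\rangle\]
with $Z^{\alpha}:=\sum_\gamma[[A^{\alpha},A^{\gamma}],A^{\gamma}]$. With the sign coming from the Simons formula, this produces $-\sum_{\alpha}|Z^{\alpha}|^2$.

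The main obstacle is bookkeeping of signs: in the Ricci formula, in the orientation of the normal curvature term of Simons' identity, and in the adjointness step. A single sign slip there would turn $-\sum_\alpha|Z^{\alpha}|^2$ into $+$. I would fix the conventions by checking the Simons formula against the known case $m=1$, where the commutator terms vanish and the formula reduces to $\Delta A=(n-S)A$. I would also check the sign of the double commutator against the standard identity $\sum_{\alpha,\beta}\langle[[A^{\alpha},A^{\beta}],A^{\beta}],A^{\alpha}\rangle=-\rho^{\perp}_0$, which matches the term $-\sum|[A^\alpha,A^\beta]|^2$ in the classical Simons inequality.
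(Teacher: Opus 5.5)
Your route is the same as the paper's: the Bochner identity for $|[A^{\alpha},A^{\beta}]|^2$, the Leibniz rule for $\Delta[A^{\alpha},A^{\beta}]$, substitution of the matrix Simons formula, and then moving $\operatorname{ad}_{A^{\beta}}$ across the inner product. Your handling of the $nA^{\alpha}$ term, the $\langle A^{\alpha},A^{\gamma}\rangle$ term and the gradient terms is correct. The gap is in the one step that fixes the sign of the $\sum_\alpha|Z^{\alpha}|^2$ term, where $Z^{\alpha}=\sum_\gamma[[A^{\alpha},A^{\gamma}],A^{\gamma}]$.

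Since $\sum_\gamma[A^{\gamma},[A^{\gamma},A^{\alpha}]]=Z^{\alpha}$ exactly, your displayed formula says
$\Delta A^{\alpha}=nA^{\alpha}-\sum_\gamma\langle A^{\alpha},A^{\gamma}\rangle A^{\gamma}+Z^{\alpha}$.
The correct formula (Lu's, which the paper quotes) is
$\Delta A^{\alpha}=nA^{\alpha}-\sum_\gamma\langle A^{\alpha},A^{\gamma}\rangle A^{\gamma}-Z^{\alpha}$.

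Your safeguards would not catch this. The $m=1$ test cannot see commutator terms. Your ``standard identity'' also has the wrong sign. With $C=[A^{\alpha},A^{\beta}]$ skew,
$\langle[C,A^{\beta}],A^{\alpha}\rangle=\operatorname{tr}(C[A^{\beta},A^{\alpha}])=-\operatorname{tr}(C^2)=|C|^2$,
so $\sum_{\alpha,\beta}\langle[[A^{\alpha},A^{\beta}],A^{\beta}],A^{\alpha}\rangle=+\rho^{\perp}_0$. For instance, $A^1=\mathrm{diag}(1,-1)$ and $A^2=\bigl(\begin{smallmatrix}0&1\\1&0\end{smallmatrix}\bigr)$ give $Z^1=4A^1$.

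Matching the $-\rho^{\perp}_0$ in Simons' formula therefore forces coefficient $-1$ on $Z^{\alpha}$. Your two sign errors are mutually consistent, so your check would ``confirm'' the wrong formula. Your adjointness computation $\sum_\beta\langle[Z^{\alpha},A^{\beta}],[A^{\alpha},A^{\beta}]\rangle=|Z^{\alpha}|^2$ is correct. Carrying your displayed formula through it therefore gives $+\sum_\alpha|Z^{\alpha}|^2$. The $-\sum_\alpha|Z^{\alpha}|^2$ you announce does not follow from what you wrote, and the lemma's statement depends on exactly this sign.

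A smaller point concerns your adjointness fact. As you state it (``$X,Y$ both symmetric or both skew''), it covers exactly the cases where both sides vanish, because $[X,B]$ has the opposite symmetry type to $X$ when $B$ is symmetric. The case you actually use is $X=Z^{\alpha}$ symmetric and $Y=[A^{\alpha},A^{\beta}]$ skew. By cyclicity of the trace, $\langle[X,B],Y\rangle=\langle X,[Y,B]\rangle$ holds for all real $X,Y$ whenever $B$ is symmetric, so state it in that generality.

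Once you correct the Simons formula, or simply cite it from Lu as the paper does, the rest of your argument coincides with the paper's proof.
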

\begin{proof}
	Use the Einstein summation convention, a direct computation yields
	\begin{equation*}
		\begin{aligned}
			\Delta h_{ij}^{\alpha}=&nh_{ij}^{\alpha}+2h_{kp}^{\alpha}h_{pj}^{\beta}h_{ik}^{\beta}-h_{kp}^{\alpha}h_{pk}^{\beta}h_{ij}^{\beta}-h_{pi}^{\alpha}h_{pk}^{\beta}h_{jk}^{\beta}-h_{ki}^{\beta}h_{pj}^{\alpha}h_{pk}^{\beta}.
		\end{aligned}
	\end{equation*}
	Then in terms of matrix, Lu \cite{Lu} gives
	\begin{equation*}
		\Delta A^{\alpha}=nA^{\alpha}-\langle A^{\alpha},A^{\beta}\rangle A^{\beta}-[A^{\beta},[A^{\beta},A^{\alpha}]].
	\end{equation*}
	Therefore
	\begin{equation*}
		\nabla [A^{\alpha},A^{\beta}]=\nabla A^{\alpha}A^{\beta}+A^{\alpha}\nabla A^{\beta}-\nabla A^{\beta}A^{\alpha}-A^{\beta}\nabla A^{\alpha},
	\end{equation*}
and
	\begin{equation*}
		\begin{aligned}
			\Delta [A^{\alpha},A^{\beta}]=&\Delta A^{\alpha}A^{\beta}+A^{\alpha}\Delta A^{\beta}+2\sum_{k}\nabla_{e_{k}} A^{\alpha}\nabla_{e_{k}} A^{\beta}\\
			&-\Delta A^{\beta}A^{\alpha}-A^{\beta}\Delta A^{\alpha}-2\sum_{k}\nabla_{e_{k}} A^{\beta}\nabla_{e_{k}} A^{\alpha}\\
			=&[\Delta A^{\alpha},A^{\beta}]+[A^{\alpha},\Delta A^{\beta}]+2\sum_{k}[\nabla_{e_{k}} A^{\alpha},\nabla_{e_{k}} A^{\beta}].
		\end{aligned}
	\end{equation*}
	By a direct computation, we obtain
	\begin{equation}\label{Stype}
		\begin{aligned}
			\frac{1}{2}\Delta\sum_{\alpha,\beta}|[A^{\alpha},A^{\beta}]|^2=&\frac{1}{2}\Delta\sum_{\alpha,\beta} \langle[A^{\alpha},A^{\beta}],[A^{\alpha},A^{\beta}] \rangle\\
			=&\sum_{\alpha,\beta}\langle\Delta [A^{\alpha},A^{\beta}],[A^{\alpha},A^{\beta}] \rangle+\sum_{\alpha,\beta}|\nabla[A^{\alpha},A^{\beta}]|^2.
		\end{aligned}
	\end{equation}
Since
	\begin{equation}\label{sub}
	\begin{aligned}
		&\sum_{\alpha,\beta}\langle\Delta [A^{\alpha},A^{\beta}],[A^{\alpha},A^{\beta}] \rangle\\
			=&2\sum_{\alpha,\beta}\langle[\Delta A^{\alpha},A^{\beta}],[A^{\alpha},A^{\beta}]\rangle+2\sum_{\alpha,\beta}\langle\sum_{k}[\nabla_{e_{k}} A^{\alpha},\nabla_{e_{k}} A^{\beta}],[A^{\alpha},A^{\beta}] \rangle\\
			=&2n\sum_{\alpha,\beta}|[A^{\alpha},A^{\beta}]|^2+2\sum_{\alpha,\beta,\gamma}\langle A^{\alpha}, A^{\gamma}\rangle\langle[A^{\beta},A^{\gamma}],[A^{\alpha},A^{\beta}]\rangle\\
			&+2\sum_{\alpha,\beta,\gamma}\langle[A^{\beta},[A^{\gamma},[A^{\gamma},A^{\alpha}]]],[A^{\alpha},A^{\beta}]\rangle+2\sum_{\alpha,\beta}\langle\sum_{k}[\nabla_{e_{k}} A^{\alpha},\nabla_{e_{k}} A^{\beta}],[A^{\alpha},A^{\beta}] \rangle\\
			=&2n\sum_{\alpha,\beta}|[A^{\alpha},A^{\beta}]|^2+2\sum_{\alpha,\beta,\gamma}\langle A^{\alpha}, A^{\gamma}\rangle\langle[A^{\beta},A^{\gamma}],[A^{\alpha},A^{\beta}]\rangle\\
			&-2\sum_{\alpha}|\sum_{\gamma}[[A^{\alpha},A^{\gamma}],A^{\gamma}]|^2+2\sum_{\alpha,\beta}\langle\sum_{k}[\nabla_{e_{k}} A^{\alpha},\nabla_{e_{k}} A^{\beta}],[A^{\alpha},A^{\beta}] \rangle,
		\end{aligned}
	\end{equation}
 substituting (\ref{sub}) into (\ref{Stype}), this completes the proof of Lemma \ref{Delta}.
	
\end{proof}

\begin{lem}\label{ind}
	Using notations above, we obtain
	\begin{equation}\label{frame}
	-\sum_{\alpha,\beta,\gamma}\langle A^{\alpha}, A^{\gamma}\rangle \langle[A^{\beta},A^{\gamma}],[A^{\alpha},A^{\beta}]\rangle\leqslant\lambda_{1}\rho_{0}^{\perp}.
	\end{equation}
\end{lem}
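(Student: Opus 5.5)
Our plan is to diagonalize $\mathcal{A}$ by changing the normal frame and then read off the bound from the diagonal entries. The left-hand side of \eqref{frame} is invariant under orthogonal changes of the normal frame. Indeed, if $\tilde A^{\alpha}=\sum_{\beta}O_{\alpha\beta}A^{\beta}$ with $O\in O(m)$, each index $\alpha,\beta,\gamma$ appears exactly twice in the expression, and linearly in each occurrence, so the orthogonality of $O$ cancels the change. The same reasoning shows that $\rho_0^{\perp}=\sum_{\alpha,\beta}|[A^{\alpha},A^{\beta}]|^2$ and the eigenvalues of $\mathcal{A}$ are frame-independent. We may therefore choose, at a given point, a normal frame in which $\mathcal{A}$ is diagonal, so that $\langle A^{\alpha},A^{\gamma}\rangle=\lambda_{\alpha}\delta_{\alpha\gamma}$ with $\lambda_{\alpha}\geqslant 0$ and $\lambda_{\alpha}\leqslant\lambda_{1}$.

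In this frame the left-hand side collapses to a single sum:
\begin{equation*}
-\sum_{\alpha,\beta}\lambda_{\alpha}\langle[A^{\beta},A^{\alpha}],[A^{\alpha},A^{\beta}]\rangle=\sum_{\alpha,\beta}\lambda_{\alpha}|[A^{\alpha},A^{\beta}]|^2,
\end{equation*}
using the antisymmetry $[A^{\beta},A^{\alpha}]=-[A^{\alpha},A^{\beta}]$. Since every term $|[A^{\alpha},A^{\beta}]|^2$ is nonnegative and $\lambda_{\alpha}\leqslant\lambda_{1}$, this sum is at most
\begin{equation*}
\lambda_{1}\sum_{\alpha,\beta}|[A^{\alpha},A^{\beta}]|^2=\lambda_{1}\rho_0^{\perp},
\end{equation*}
which is \eqref{frame}.

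The only step that needs care is the frame invariance. We will verify it explicitly by writing $\langle \tilde A^{\alpha},\tilde A^{\gamma}\rangle=\sum O_{\alpha\mu}O_{\gamma\nu}\langle A^{\mu},A^{\nu}\rangle$ and expanding the commutators bilinearly in the same way, and then summing over $\alpha,\beta,\gamma$ using $\sum_{\alpha}O_{\alpha\mu}O_{\alpha\mu'}=\delta_{\mu\mu'}$. After that check, the rest is immediate.
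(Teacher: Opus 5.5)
Your proposal is correct and follows the paper's proof essentially verbatim: the paper also invokes frame-invariance of the left-hand side, chooses at each point a normal frame with $\langle A^{\alpha},A^{\beta}\rangle=0$ for $\alpha\neq\beta$, and bounds $\sum_{\alpha,\beta}\|A^{\alpha}\|^2|[A^{\alpha},A^{\beta}]|^2$ by $\lambda_1\rho_0^{\perp}$. Your explicit $O(m)$-invariance check, in which each normal index appears exactly twice and linearly, supplies the step the paper dismisses as ``easy to see.''
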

\begin{proof}
	It is easy to see that the left side of (\ref{frame}) is independent of the selection of the frame. Then at each fixed point, we choose a suitable orthonormal normal frame, such that $\langle A^{\alpha},A^{\beta}\rangle = 0$  for any $n+1\leqslant\alpha\neq\beta\leqslant n+m$. Then we obtain
	\begin{equation*}
		\begin{aligned}
		-\sum_{\alpha,\beta,\gamma}\langle A^{\alpha}, A^{\gamma}\rangle \langle[A^{\beta},A^{\gamma}],[A^{\alpha},A^{\beta}]\rangle=&-\sum_{\alpha,\beta}\|A^{\alpha}\|^2 \langle[A^{\beta},A^{\alpha}],[A^{\alpha},A^{\beta}]\rangle\\
		=&\sum_{\alpha,\beta}\|A^{\alpha}\|^2|[A^{\alpha},A^{\beta}]|^2\\
		\leqslant&\lambda_{1}\rho_{0}^{\perp}.
	\end{aligned}
	\end{equation*}
\end{proof}
\begin{lem}\label{DV}
	Using notations above, we obtain
	\begin{equation*}
	-\sum_{\alpha,\beta}\langle\sum_{k}[\nabla_{e_{k}} A^{\alpha},\nabla_{e_{k}} A^{\beta}],[A^{\alpha},A^{\beta}]\rangle\leqslant|\nabla h|^2\sqrt{\rho^{\perp}_0}.
	\end{equation*}
\end{lem}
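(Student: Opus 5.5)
Fix a point of $M$ and write the left-hand side as a sum over the tangent index $k$. For each fixed $k$, put $B_\alpha:=\nabla_{e_k}A^{\alpha}=(h^{\alpha}_{ijk})_{i,j}$, which is a real symmetric $n\times n$ matrix, and $C_{\alpha\beta}:=[A^{\alpha},A^{\beta}]$. The Cauchy--Schwarz inequality in the space of $m^2$-tuples of $n\times n$ matrices, with inner product $\sum_{\alpha,\beta}\langle X_{\alpha\beta},Y_{\alpha\beta}\rangle$, gives
\begin{equation*}
-\sum_{\alpha,\beta}\langle[B_\alpha,B_\beta],C_{\alpha\beta}\rangle\leqslant\Big(\sum_{\alpha,\beta}\|[B_\alpha,B_\beta]\|^2\Big)^{1/2}\Big(\sum_{\alpha,\beta}\|[A^{\alpha},A^{\beta}]\|^2\Big)^{1/2}.
\end{equation*}
The second factor is exactly $\sqrt{\rho^{\perp}_0}$ by definition.

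Next I apply the DDVV inequality (\ref{DDVV}) to the symmetric matrices $B_{n+1},\ldots,B_{n+m}$. This gives
\begin{equation*}
\Big(\sum_{\alpha,\beta}\|[B_\alpha,B_\beta]\|^2\Big)^{1/2}\leqslant\sum_{\alpha}\|B_\alpha\|^2=\sum_{\alpha,i,j}(h^{\alpha}_{ijk})^2.
\end{equation*}
Symmetry of $B_\alpha$ is exactly what DDVV needs, and it holds because $h^{\alpha}_{ijk}$ is symmetric in $i,j$. The Böttcher--Wenzel inequality (\ref{BW}) would not suffice here: summing it over $\alpha,\beta$ loses a factor $\sqrt2$. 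This is the only step where a choice of inequality matters, and DDVV gives the constant $1$ required by the lemma.

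Finally I sum over $k$. Since $\sum_k\sum_{\alpha,i,j}(h^{\alpha}_{ijk})^2=|\nabla h|^2$, this yields
\[
-\sum_{\alpha,\beta}\Big\langle\sum_{k}[\nabla_{e_{k}}A^{\alpha},\nabla_{e_{k}}A^{\beta}],[A^{\alpha},A^{\beta}]\Big\rangle\leqslant|\nabla h|^2\sqrt{\rho^{\perp}_0}.
\]
Both sides are independent of the choice of frame, so the pointwise bound holds everywhere on $M$.
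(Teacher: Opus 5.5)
Your proof is correct, and it differs from the paper's in the order in which Cauchy--Schwarz and the sum over $k$ are taken.

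The paper applies Cauchy--Schwarz once, to the $k$-summed commutators. This produces the factor $\big(\sum_{\alpha,\beta}\big|\sum_k[\nabla_{e_k}A^{\alpha},\nabla_{e_k}A^{\beta}]\big|^2\big)^{1/2}$. The paper then asserts that at each point a tangent frame can be chosen so that the cross terms $k\neq l$ in this square vanish. Only after that does it apply DDVV for each $k$ and use $\sum_k\big(\sum_\alpha\|\nabla_{e_k}A^{\alpha}\|^2\big)^2\leqslant|\nabla h|^4$.

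You instead apply Cauchy--Schwarz and DDVV for each fixed $k$ and sum afterwards. This amounts to the triangle inequality $\big|\sum_k X_k\big|\leqslant\sum_k|X_k|$ for the $m^2$-tuples $X_k=([\nabla_{e_k}A^{\alpha},\nabla_{e_k}A^{\beta}])_{\alpha,\beta}$, so no special frame is needed.

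The difference matters, because the paper's frame normalization is not available in general. For $n=2$, minimality and the Codazzi equation force, in any orthonormal frame,
\[
\nabla_{e_1}A^{\alpha}=\begin{pmatrix}a_\alpha&b_\alpha\\ b_\alpha&-a_\alpha\end{pmatrix},\qquad \nabla_{e_2}A^{\alpha}=\begin{pmatrix}b_\alpha&-a_\alpha\\ -a_\alpha&-b_\alpha\end{pmatrix}.
\]
A direct computation then gives $X_1=X_2$, both with entries $2(a_\alpha b_\beta-a_\beta b_\alpha)\begin{pmatrix}0&1\\ -1&0\end{pmatrix}$. So the cross terms total $2|X_1|^2$ in every frame. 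This is nonzero, for instance, when $m\geqslant 2$ with $(a_{n+1},b_{n+1})=(1,0)$ and $(a_{n+2},b_{n+2})=(0,1)$.

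Even if the normalization were available, the paper's route would only yield the intermediate bound $\big(\sum_k(\sum_\alpha\|\nabla_{e_k}A^{\alpha}\|^2)^2\big)^{1/2}$, which it immediately relaxes to $|\nabla h|^2$. Your argument therefore loses nothing, and it is a fully justified proof of the same estimate.
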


\begin{proof}
	Using the Cauchy inequality, we have
	\begin{equation}\label{equation ekAABBA upper bound}
		\begin{aligned}
			-\sum_{\alpha,\beta}\langle\sum_{k}[\nabla_{e_{k}} A^{\alpha},\nabla_{e_{k}} A^{\beta}],[A^{\alpha},A^{\beta}]\rangle
			\leqslant&\big|\sum_{\alpha,\beta}\langle\sum_{k}[\nabla_{e_{k}} A^{\alpha},\nabla_{e_{k}} A^{\beta}],[A^{\alpha},A^{\beta}]\rangle\big|\\
			\leqslant&\sum_{\alpha,\beta}\big|\sum_{k}[\nabla_{e_{k}} A^{\alpha},\nabla_{e_{k}} A^{\beta}]\big|\big|[A^{\alpha},A^{\beta}]\big|\\
			\leqslant&\sqrt{\sum_{\alpha,\beta}\big|\sum_{k}[\nabla_{e_{k}} A^{\alpha},\nabla_{e_{k}} A^{\beta}]\big|^2}\sqrt{\sum_{\alpha,\beta}\big|[A^{\alpha},A^{\beta}]\big|^2}\\
			=&\sqrt{\sum_{\alpha,\beta}\sum_{k,l}\langle[\nabla_{e_{k}} A^{\alpha},\nabla_{e_{k}} A^{\beta}],[\nabla_{e_{l}} A^{\alpha},\nabla_{e_{l}} A^{\beta}]\rangle}\sqrt{\rho^{\perp}_0}.
		\end{aligned}			
	\end{equation}
	At each fixed point, one may choose a local orthonormal frame $\{e_{i}\}_{i=1}^{n}$ such that
\begin{equation*}
	\sum_{\alpha,\beta}\sum_{k,l}\langle[\nabla_{e_{k}} A^{\alpha},\nabla_{e_{k}} A^{\beta}],[\nabla_{e_{l}} A^{\alpha},\nabla_{e_{l}} A^{\beta}]\rangle=\sum_{\alpha,\beta}\sum_{k}\big|[\nabla_{e_{k}} A^{\alpha},\nabla_{e_{k}} A^{\beta}]\big|^2.
\end{equation*}
By the DDVV inequality (\ref{DDVV}), we obtain
\begin{equation*}
	\begin{aligned}
	\sum_{\alpha,\beta}\sum_{k}\big|[\nabla_{e_{k}} A^{\alpha},\nabla_{e_{k}} A^{\beta}]\big|^2\leqslant&\sum_{k}\left(\sum_{\alpha}|\nabla_{e_{k}} A^{\alpha}|^2\right)^2\\
	=&\sum_{k}\sum_{\alpha,\beta}|\nabla_{e_{k}} A^{\alpha}|^2|\nabla_{e_{k}} A^{\beta}|^2\\
	\leqslant&|\nabla h|^4.
	\end{aligned}
\end{equation*}
Hence
\begin{equation*}
	\sqrt{\sum_{\alpha,\beta}\sum_{k,l}\langle[\nabla_{e_{k}} A^{\alpha},\nabla_{e_{k}} A^{\beta}],[\nabla_{e_{l}} A^{\alpha},\nabla_{e_{l}} A^{\beta}]\rangle}\leqslant |\nabla h|^2,
\end{equation*}
which completes the proof 
by (\ref{equation ekAABBA upper bound}).
\end{proof}

\begin{lem}\label{bd}
	Using notations above, we obtain
	\begin{equation*}
	\sum_{\alpha}\big|\sum_{\gamma}[[A^{\alpha},A^{\gamma}],A^{\gamma}]\big|^2\leqslant2S\rho^{\perp}_0.
	\end{equation*}
\end{lem}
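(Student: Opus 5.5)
We prove the inequality $\sum_{\alpha}\big|\sum_{\gamma}[[A^{\alpha},A^{\gamma}],A^{\gamma}]\big|^2\leqslant2S\rho^{\perp}_0$ pointwise, using a Cauchy--Schwarz estimate over $\gamma$ followed by the B\"ottcher--Wenzel inequality (\ref{BW}).

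Fix a point and an index $\alpha$, and set $C_{\alpha\gamma}=[A^{\alpha},A^{\gamma}]$. First apply the triangle inequality and then Cauchy--Schwarz in the index $\gamma$:
\begin{equation*}
\Big|\sum_{\gamma}[C_{\alpha\gamma},A^{\gamma}]\Big|^2\leqslant\Big(\sum_{\gamma}\big|[C_{\alpha\gamma},A^{\gamma}]\big|\Big)^2\leqslant\Big(\sum_{\gamma}\frac{|[C_{\alpha\gamma},A^{\gamma}]|^2}{|A^{\gamma}|^2}\Big)\Big(\sum_{\gamma}|A^{\gamma}|^2\Big).
\end{equation*}
In this step, any term with $A^{\gamma}=0$ contributes nothing and is simply dropped. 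The second factor equals $S$.

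Next, apply the B\"ottcher--Wenzel inequality (\ref{BW}) with $X=C_{\alpha\gamma}$ and $Y=A^{\gamma}$:
\begin{equation*}
|[C_{\alpha\gamma},A^{\gamma}]|^2\leqslant 2|C_{\alpha\gamma}|^2|A^{\gamma}|^2.
\end{equation*}
Substituting into the first factor cancels $|A^{\gamma}|^2$, which gives
\begin{equation*}
\Big|\sum_{\gamma}[C_{\alpha\gamma},A^{\gamma}]\Big|^2\leqslant 2S\sum_{\gamma}|[A^{\alpha},A^{\gamma}]|^2.
\end{equation*}

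Finally, sum over $\alpha$. The right-hand side becomes $2S\sum_{\alpha,\gamma}|[A^{\alpha},A^{\gamma}]|^2=2S\rho^{\perp}_0$, which is the claimed inequality.

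The only delicate point is choosing the weights in Cauchy--Schwarz so that B\"ottcher--Wenzel produces exactly the constant $2S$, rather than a cruder bound such as $2mS$ or $4S^2$. Weighting by $|A^{\gamma}|$ as above achieves this.
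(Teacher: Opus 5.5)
Your proof is correct and is essentially the paper's argument: triangle inequality, B\"ottcher--Wenzel, and Cauchy--Schwarz in $\gamma$. The paper just applies B\"ottcher--Wenzel first, bounding each term by $\sqrt{2}\,|[A^{\alpha},A^{\gamma}]|\,|A^{\gamma}|$, and then uses unweighted Cauchy--Schwarz, so it never divides by $|A^{\gamma}|^2$.
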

\begin{proof}
		Using the Cauchy inequality and Böttcher-Wenzel inequality (\ref{BW}), we get
	\begin{equation*}
		\begin{aligned}
		\sum_{\alpha}\big|\sum_{\gamma}[[A^{\alpha},A^{\gamma}],A^{\gamma}]\big|^2&\leqslant\sum_{\alpha}\big(\sum_{\gamma}|[[A^{\alpha},A^{\gamma}],A^{\gamma}]|\big)^2\\
		&\leqslant\sum_{\alpha}\big(\sum_{\gamma}\sqrt{2}|[A^{\alpha},A^{\gamma}]|\cdot|A^{\gamma}|\big)^2\\
		&\leqslant\sum_{\alpha}2\sum_{\gamma}|[A^{\alpha},A^{\gamma}]|^2\sum_{\gamma}|A^{\gamma}|^2\\
		&=2S\rho^{\perp}_0.
		\end{aligned}
	\end{equation*}
\end{proof}
By Lemmas \ref{Delta}, \ref{ind}, \ref{DV} and \ref{bd}, one has the following theorem.
	\begin{thm}\label{main}
	Let $M^n$ be an $n$-dimensional closed minimal submanifold immersed in the unit sphere $\mathbb{S}^{n+m}$. If $\lambda_{1}\leqslant n$ and $\rho^{\perp}\leqslant\frac{1}{\sqrt{2}n(n-1)}\mathop{\inf}\limits_{p\in M}(n-\lambda_{1})(p)$, then $\rho^{\perp}\equiv 0$, which means the normal bundle of $M^n$ is flat.
	\end{thm}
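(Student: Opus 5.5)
The plan is to avoid the full Laplacian formula of Lemma \ref{Delta} and integrate instead the classical Simons-type identity for $S$. By the same computation of $\Delta A^{\alpha}$ used in the proof of Lemma \ref{Delta} (namely $\Delta A^{\alpha}=nA^{\alpha}-\sum_\beta\langle A^{\alpha},A^{\beta}\rangle A^{\beta}-\sum_\beta[A^{\beta},[A^{\beta},A^{\alpha}]]$), one has
\begin{equation*}
\frac12\Delta S=|\nabla h|^2+nS-\sum_{\alpha,\beta}\langle A^{\alpha},A^{\beta}\rangle^2-\rho^{\perp}_0 .
\end{equation*}
To see this, I take the inner product of $\Delta A^{\alpha}$ with $A^{\alpha}$, sum over $\alpha$, and use $\langle[A^{\beta},[A^{\beta},A^{\alpha}]],A^{\alpha}\rangle=|[A^{\alpha},A^{\beta}]|^2$. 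Integrating over the closed manifold $M$ then gives
\begin{equation*}
0\leqslant\int_M|\nabla h|^2\,dM=\int_M\Big(\sum_{\alpha,\beta}\langle A^{\alpha},A^{\beta}\rangle^2+\rho^{\perp}_0-nS\Big)dM .
\end{equation*}

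Next I estimate the two positive terms pointwise. In a normal frame diagonalizing $\mathcal{A}$ we have $\sum_{\alpha,\beta}\langle A^{\alpha},A^{\beta}\rangle^2=\sum_r\lambda_r^2\leqslant\lambda_1\sum_r\lambda_r=\lambda_1S$. For the commutator term I write $\rho^{\perp}_0=\sqrt{\rho^{\perp}_0}\cdot\sqrt{\rho^{\perp}_0}$ and bound one factor by the DDVV inequality (\ref{DDVV}), which gives $\sqrt{\rho^{\perp}_0}\leqslant S$. I bound the other factor using the hypothesis together with (\ref{rel}):
\begin{equation*}
\sqrt{\rho^{\perp}_0}=n(n-1)\rho^{\perp}\leqslant\frac{1}{\sqrt2}\inf_{M}(n-\lambda_1)\leqslant\frac{1}{\sqrt2}(n-\lambda_1)\quad\text{pointwise}.
\end{equation*}
Hence the integrand is at most
\begin{equation*}
\lambda_1S+\frac{1}{\sqrt2}(n-\lambda_1)S-nS=-\Big(1-\frac1{\sqrt2}\Big)(n-\lambda_1)S\leqslant0,
\end{equation*}
using $\lambda_1\leqslant n$.

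Therefore all inequalities are equalities. In particular $(n-\lambda_1)S\equiv0$ on $M$ (and also $\nabla h\equiv0$). If $S\equiv0$, then $\rho^{\perp}_0\leqslant S^2=0$ and we are done. Otherwise there is a point $p$ with $S(p)>0$, and there $\lambda_1(p)=n$, so $\inf_M(n-\lambda_1)=0$. The hypothesis then forces $\rho^{\perp}\leqslant0$ everywhere, that is, $\rho^{\perp}\equiv0$ and the normal bundle is flat.

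The main point to get right is splitting $\rho^{\perp}_0$ as a product and applying DDVV to exactly one factor, so that the hypothesis (which is linear in $\sqrt{\rho^{\perp}_0}$) can be paired with $S$ and compared against $(n-\lambda_1)S$. I would also double-check the sign and normalization of the Simons identity in this frame convention. If one instead wants to use Lemmas \ref{Delta}--\ref{bd} as the paper suggests, the same global infimum trick would apply: any point with $S>0$ and $\lambda_1=n$ kills the right-hand side of the hypothesis. The route above seems the most direct.
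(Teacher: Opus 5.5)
Your proof is correct, and it takes a genuinely different, much shorter route than the paper.

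\textbf{The paper's route.} It derives a Bochner formula for $\rho^{\perp}_0$ itself (Lemma \ref{Delta}). It bounds the cross terms with Lemma \ref{ind}, Lemma \ref{DV} (DDVV applied to the $\nabla_{e_k}A^{\alpha}$) and Lemma \ref{bd} (B\"ottcher--Wenzel), which gives (\ref{int}). It then couples this with the integrated Simons identity (\ref{nablah}) through $C=\max\rho^{\perp}_0$. Forcing equality leaves the case $\rho^{\perp}_0\equiv\frac12(n-\lambda_1)^2\neq 0$, with $\mathcal{A}$ having eigenvalues $\lambda_1$ (multiplicity $r$) and $0$. That case is finally ruled out using the Li--Li/Chen--Xu gap theorem (Corollary \ref{LL}) and the quadratic in (\ref{end}).

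\textbf{Your route.} You use only three ingredients:
\begin{enumerate}
\item[(i)] the integrated Simons identity (\ref{nablah});
\item[(ii)] the bound $\|\mathcal{A}\|^2\leqslant\lambda_1 S$;
\item[(iii)] DDVV applied once, to the $A^{\alpha}$ themselves, through the splitting $\rho^{\perp}_0\leqslant S\cdot\frac{1}{\sqrt2}(n-\lambda_1)$.
\end{enumerate}
You add the observation that, because the hypothesis uses a global infimum, it collapses to $\rho^{\perp}\leqslant 0$ as soon as $\lambda_1$ attains $n$ anywhere. So the only substantive case is $\lambda_1<n$ everywhere, where you get $S\equiv 0$ directly. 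I checked the Simons identity normalization, the DDVV bound $\sqrt{\rho^{\perp}_0}\leqslant S$, and the sign of the resulting integrand; all are right.

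\textbf{What your approach buys.} Besides bypassing Lemmas \ref{Delta}--\ref{bd} and Corollary \ref{LL} entirely, it shows the factor $\frac{1}{\sqrt2}$ is not essential. If $\sqrt{\rho^{\perp}_0}\leqslant t\inf_M(n-\lambda_1)$ for any $t<1$, your integrand is bounded by $-(1-t)(n-\lambda_1)S$, and the same conclusion follows. The value $t=1$ cannot give flatness: the Veronese surface has $n=2$, $S=\frac43$, $\lambda_1=\frac23$ and $\rho^{\perp}_0=\frac{16}{9}$. It therefore satisfies $\sqrt{\rho^{\perp}_0}=n-\lambda_1$ while its normal bundle is nonflat.

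\textbf{What the paper's approach buys.} It produces a Simons-type identity for $\rho^{\perp}_0$ that may be useful in other pinching problems. It is not needed for this statement.
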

\begin{proof}
From Lemmas \ref{Delta}, \ref{ind}, \ref{DV} and \ref{bd}, we obtain
\begin{equation}\label{fr}
	\begin{aligned}
	\frac{1}{4}\Delta\rho^{\perp}_0\geqslant&n\rho^{\perp}_0-\lambda_{1}\rho_{0}^{\perp}-2S\rho_{0}^{\perp}-|\nabla h|^2\sqrt{\rho_{0}^{\perp}}+\frac{1}{2}\sum_{\alpha,\beta}|\nabla [A^{\alpha},A^{\beta}]|^2.
	\end{aligned}
\end{equation}
Integrating (\ref{fr}) gives
\begin{equation}\label{int}
	\begin{aligned}
	0\leqslant\int_{M}&\frac{1}{2}\sum_{\alpha,\beta}|\nabla [A^{\alpha},A^{\beta}]|^2dM\\
	=\int_{M}&-n\rho^{\perp}_0+\lambda_{1}\rho_{0}^{\perp}+2S\rho_{0}^{\perp}+|\nabla h|^2\sqrt{\rho_{0}^{\perp}} dM\\
	=\int_{M}&\sqrt{\rho^{\perp}_0}\left(|\nabla h|^2+\big(\lambda_{1}+2S-n\big)\sqrt{\rho^{\perp}_0}\right)dM.
\end{aligned}
\end{equation}
On the other hand, from Simons-type formula
\begin{equation}\label{Simons}
	\begin{aligned}
		\frac{1}{2}\Delta S=|\nabla h|^2+nS-\sum_{\alpha,\beta}|[A^{\alpha},A^{\beta}]|^2-\sum_{\alpha,\beta}|\langle A^{\alpha},A^{\beta}\rangle|^2,
	\end{aligned}
\end{equation}
since $\mathcal{A}=(\langle A^{\alpha},A^{\beta}\rangle)_{m\times m}$, integration of (\ref{Simons}) leads to
\begin{equation}\label{nablah}
	\int_{M}|\nabla h|^2dM=\int_{M}\rho^{\perp}_0-nS+\|\mathcal{A}\|^2 dM.
\end{equation}
Denote $C=\mathop{\max}\limits_{p\in M}\rho^{\perp}_0(p)$. If $C=0$, then $\rho_{0}^{\perp}\equiv 0$. Hence, we assume $C>0$ in what follows.
Since $$\|\mathcal{A}\|^2=\sum_{i}\lambda_{i}^2\leqslant \lambda_{1}\sum_{i}\lambda_{i}=\lambda_{1}S,$$ then (\ref{int}) becomes
\begin{equation}\label{key}
	\begin{aligned}
	0\leqslant\int_{M}&\sqrt{\rho^{\perp}_0}|\nabla h|^2+(\lambda_{1}+2S-n)\rho^{\perp}_0dM\\
	\leqslant\int_{M}&\sqrt{C}(\rho^{\perp}_0-nS+\|\mathcal{A}\|^2)+(\lambda_{1}+2S-n)\rho^{\perp}_0dM\\
	\leqslant\int_{M}&\sqrt{C}(\rho^{\perp}_0-nS+S\lambda_{1})+(\lambda_{1}+2S-n)\rho^{\perp}_0dM\\
	=\int_{M}&\rho^{\perp}_0(\sqrt{C}-n+\lambda_{1})+\sqrt{C}S(-n+\lambda_{1}+\frac{2\rho^{\perp}_0}{\sqrt{C}})dM.
		\end{aligned}
\end{equation}
Combining (\ref{rel}) with $\lambda_{1}\leqslant n$, $\rho^{\perp}\leqslant\frac{1}{\sqrt{2}n(n-1)}\mathop{\inf}\limits_{p\in M}(n-\lambda_{1})(p)$, we have 
\begin{equation}\label{equation p0C2n}
 \rho^{\perp}_0\leqslant C\leqslant\dfrac{1}{2}\mathop{\inf}\limits_{p\in M}(n-\lambda_{1})^2(p).
\end{equation}
Therefore, (\ref{key}) becomes
\begin{equation}\label{leq}
	\begin{aligned}
	0\leqslant\int_{M}&\rho^{\perp}_0(\sqrt{C}-n+\lambda_{1})+\sqrt{C}S\left( -n+\lambda_{1}+\frac{\mathop{\inf}\limits_{p\in M}(n-\lambda_{1})^2}{\sqrt{C}}\right) dM\\
	\leqslant\int_{M}&\rho^{\perp}_0(\sqrt{C}-n+\lambda_{1})+\sqrt{C}S(n-\lambda_{1})\left(\frac{\mathop{\inf}\limits_{p\in M}(n-\lambda_{1})}{\sqrt{C}}-1\right)dM.
	\end{aligned}
\end{equation}
The equalities in (\ref{leq}) hold if and only if $\rho_{0}^{\perp}=\frac{1}{2}\mathop{\inf}\limits_{p\in M}(n-\lambda_{1})^2(p)={\rm Constant}$.
Furthermore, from (\ref{nablah}) we obtain
\begin{equation}\label{conv}
	\int_{M}\rho^{\perp}_0dM
	\geqslant \int_{M}nS-\|\mathcal{A}\|^2dM\geqslant\int_{M}S(n-\lambda_{1})dM.
\end{equation}
By (\ref{equation p0C2n}), $\mathop{\inf}\limits_{p\in M}(n-\lambda_{1})(p)\geqslant \sqrt{C}$, thus substituting (\ref{conv}) into (\ref{leq}) implies
\begin{equation*}
	\begin{aligned}
	0\leqslant&\int_{M}\rho^{\perp}_0(\sqrt{C}-n+\lambda_{1})+\rho^{\perp}_0\left(\mathop{\inf}\limits_{p\in M}(n-\lambda_{1})-\sqrt{C}\right)dM\\
	=&\int_{M}\rho^{\perp}_0\left(\mathop{\inf}\limits_{p\in M}(n-\lambda_{1})-(n-\lambda_{1})\right)dM\leqslant0.
	\end{aligned}
\end{equation*}
Hence, the equalities hold for all the above inequalities simultaneously, which yields that either $\rho^{\perp}_0\equiv 0$, or $\rho^{\perp}_0\equiv\dfrac{1}{2}(n-\lambda_{1})^2={\rm Constant}\neq0$ and there exists a positive integer $r$ such that $\lambda_{1}=\lambda_{2}=\cdots=\lambda_{r}$, $\lambda_{r+1}=\cdots=\lambda_{m}=0$ . 
In the latter case, one has $\lambda_{1}$ is  constant, then $S=r\lambda_{1}$ is also constant.
By Corollary \ref{LL},
if $S>0$, then  $$\max\limits_{p\in M} S(p)=S\geqslant \dfrac{2n}{3}.$$ Combining this with the assumption condition, we derive that
\begin{equation*}
	\frac{2n}{3r}\leqslant \lambda_{1}\leqslant n.
\end{equation*}
Substituting $\rho^{\perp}_0\equiv\dfrac{1}{2}(n-\lambda_{1})^2$ into (\ref{nablah}) gives
\begin{equation}\label{end}
	\begin{aligned}
	0\leqslant\int_{M}|\nabla h|^2dM=&\int_{M}\frac{1}{2}(n-\lambda_{1})^2-nr\lambda_{1}+r\lambda_{1}^2 dM\\
	=&\int_{M}( \frac{1}{2}+r) \lambda_{1}^2-n(1+r)\lambda_{1}+\frac{1}{2}n^2dM\\
	=&\int_{M}( \frac{1}{2}+r)(\lambda_{1}-n)(\lambda_{1}-\frac{n}{1+2r})dM.\\
	\end{aligned}
\end{equation}
Since the right-hand side of (\ref{end}) is nonpositive, we must have $\lambda_{1}\equiv n$ and $\rho^{\perp}_0\equiv 0$, which contradicts the assumption $\rho^{\perp}_0\not\equiv 0$ (or $C>0$) .
Thus $\rho^{\perp}_0\equiv 0$, and the proof of Theorem \ref{main} is complete.
\end{proof}
For later use, we first introduce some basic notions. Let $f:M^n\rightarrow Q^{n+m}_c$ be an isometric immersion of a connected $n$-dimensional Riemannian manifold into an $(n+m)$-dimensional Riemannian manifold $Q^{n+m}_{c}$ of constant sectional curvature c, and let $\operatorname{II}:TM\times TM\rightarrow NM$
be the second fundamental form. For each $x\in M^{n}$, the space
\begin{equation*}
	N_{1}(x)={\rm Span}\{\operatorname{II}(X,Y):X,Y\in T_{x}M\}
\end{equation*}
 is called the first normal space of $f$ at $x$. An isometric immersion $f$ is said to be 1-regular if the first normal space $N_1(x)$ has constant dimension independent of $x\in M^n$. If $f$ is 1-regular, then $N_1(x)$ becomes a subbundle of $NM$, which is called the first normal bundle of $M^n$. 
   \begin{lem}[\cite{1984}]\label{1984}
	Let $f:M^n\rightarrow Q^{n+m}_{c}$ be a 1-regular immersion, where $Q^{n+m}_{c}$ denotes the simply connected, n-dimensional real space form with constant sectional curvature c. Write $$\operatorname{II}(e_i, e_j)=\sum_{\alpha=n+1}^{n+m}h_{ij}^\alpha e_\alpha\ \qquad
	\mbox{for}\ 1\leqslant i, j\leqslant n.$$ If for every point of $M$, there exists an orthonormal frame $\{E_1, \ldots, E_n\}$ such that for each $1\leqslant i\leqslant n$,
	$\operatorname{II}(E_i,E_j)=0$ for any $j\neq i$ and
	$\operatorname{II}(E_i,E_i)=\sum\limits_{j\neq i}a_j\operatorname{II}(E_j,E_j)$ for some real numbers $a_j$'s,
	then the first normal bundle $N_1$ is parallel.
\end{lem}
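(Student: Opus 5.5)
The plan is to verify the defining property of parallelism directly: for every local section $\xi$ of $N_1$ and every tangent vector $X$, the normal derivative $\nabla^{\perp}_X\xi$ lies again in $N_1$. Since $f$ is 1-regular, $N_1$ is a smooth subbundle of $NM$. The map $(X,\xi)\mapsto(\nabla^{\perp}_X\xi)^{N_1^{\perp}}$ is tensorial, so it is a continuous tensor field. Hence it suffices to show that it vanishes on an open dense subset of $M$. On such a subset I will assume, after shrinking, that the frame $\{E_1,\dots,E_n\}$ in the hypothesis can be chosen smooth and that the coefficients $a_j$ can be chosen smooth. Here is how I would get this. The vectors $\xi_i:=\operatorname{II}(E_i,E_i)$ span $N_1$, and the $E_i$ simultaneously diagonalize all shape operators. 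On the open dense set where the multiplicity pattern of the common eigenspace decomposition is locally constant, one gets a smooth local frame; the $\xi_i$ are then smooth. Choosing, for each $i$, a locally constant subset of $\{\xi_j\}_{j\neq i}$ that is a basis of $N_1$ makes the $a_j$ smooth. I expect this regularity bookkeeping to be the main obstacle. For a repeated eigenvalue, any orthonormal basis of the common eigenspace also diagonalizes $\operatorname{II}$, so the argument is unaffected by the choice within each eigenspace.

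The key steps, in order, are as follows. First, for $k\neq i$, apply the Codazzi equation $(\nabla_{E_k}\operatorname{II})(E_i,E_i)=(\nabla_{E_i}\operatorname{II})(E_k,E_i)$, which holds because the ambient space has constant curvature. Expanding both sides gives
\begin{equation*}
\nabla^{\perp}_{E_k}\xi_i-2\operatorname{II}(\nabla_{E_k}E_i,E_i)=\nabla^{\perp}_{E_i}\big(\operatorname{II}(E_k,E_i)\big)-\operatorname{II}(\nabla_{E_i}E_k,E_i)-\operatorname{II}(E_k,\nabla_{E_i}E_i).
\end{equation*}
Since $\operatorname{II}(E_k,E_i)\equiv0$ on the open set, the first term on the right vanishes. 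Every remaining $\operatorname{II}$-term lies in $N_1$ by definition. Hence $\nabla^{\perp}_{E_k}\xi_i\in N_1$ whenever $k\neq i$.

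Second, for the diagonal case $k=i$, use the hypothesis $\xi_i=\sum_{j\neq i}a_j\xi_j$ and differentiate:
\begin{equation*}
\nabla^{\perp}_{E_i}\xi_i=\sum_{j\neq i}E_i(a_j)\,\xi_j+\sum_{j\neq i}a_j\nabla^{\perp}_{E_i}\xi_j .
\end{equation*}
Each $\nabla^{\perp}_{E_i}\xi_j$ with $j\neq i$ lies in $N_1$ by the first step, so $\nabla^{\perp}_{E_i}\xi_i\in N_1$.

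Combining the two steps, $\nabla^{\perp}_{E_k}\xi_i\in N_1$ for all $i,k$. Since the $\xi_i$ span $N_1$, any section of $N_1$ has the form $\sum f_i\xi_i$, and then $\nabla^{\perp}_X\big(\sum f_i\xi_i\big)=\sum X(f_i)\xi_i+\sum f_i\nabla^{\perp}_X\xi_i\in N_1$. So the tensor $(\nabla^{\perp}\cdot)^{N_1^{\perp}}$ vanishes on the open dense set, hence everywhere by continuity. Therefore $N_1$ is parallel.
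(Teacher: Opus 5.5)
Your proof is correct. The paper does not prove this lemma; it quotes it from \cite{1984} and uses it as a black box in the proof of Theorem~\ref{sec}, so there is no in-paper argument to compare with. What you give is the standard Codazzi-equation argument. Your first step in fact yields the explicit formula $\nabla^{\perp}_{E_k}\xi_i=\langle\nabla_{E_i}E_i,E_k\rangle(\xi_i-\xi_k)$ for $k\neq i$, and your second step handles the diagonal derivatives through the linear relation.

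Three points in your regularity bookkeeping deserve one more line each.

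(a) The transfer of the hypothesis from the pointwise frame to your smooth frame needs more than ``any basis of an eigenspace diagonalizes $\operatorname{II}$''. An orthonormal frame with $\operatorname{II}(E_i,E_j)=0$ for $i\neq j$ consists of common eigenvectors of all shape operators, so it is adapted to the common eigenspace decomposition. Hence the multiset $\{\operatorname{II}(E_i,E_i)\}$ does not depend on the frame, and the condition $\xi_i\in\operatorname{span}\{\xi_j\}_{j\neq i}$ holds for every diagonalizing frame, not just the one given.

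(b) For smooth $a_j$ you only need the relation at the base point $x_0$. It says $\{\xi_j(x_0)\}_{j\neq i}$ spans $N_1(x_0)$. A basis extracted from this set stays a basis of $N_1$ nearby, by continuity and constant rank, and $\xi_i$ has smooth coefficients in it.

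(c) Density of your good set follows from lower semicontinuity of the number of distinct principal normals. On that set the common eigendistributions have locally constant multiplicities, so they are smooth.

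Your device of the continuous tensor $(\nabla^{\perp}\,\cdot\,)^{N_1^{\perp}}$ vanishing on an open dense set is a cleaner way to handle regularity than the ``differentiable almost everywhere'' frames in the proof of Theorem~\ref{sec}. Note also that in the paper's application $a_j\equiv-1$ by minimality, so there your second step needs no differentiation of coefficients.
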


\begin{lem}[\cite{1971}]\label{1971}
	Let $f:M^n\rightarrow Q^{n+m}_{c}$ be an isometric immersion of a connected $n$-dimensional Riemannian manifold into an $(n+m)$-dimensional Riemannian manifold $Q^{n+m}_{c}$ of constant sectional curvature c. If the first normal bundle $N_1$ is parallel and dim $ N_1=p<m$, then the codimension of $f$ can be reduced to p.
\end{lem}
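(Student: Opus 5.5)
The approach is the classical Erbacher argument: show that the orthogonal complement of the first normal bundle is a constant subspace of the ambient linear model, so that $f(M)$ lies in a totally geodesic $Q^{n+p}_c$. I treat $c>0$ in detail, which is the case the paper needs, and indicate the changes for $c\leqslant 0$. By scaling, regard $Q^{n+m}_c$ as the round sphere $\mathbb{S}^{n+m}\subset\mathbb{R}^{n+m+1}$, and let $D$ be the flat connection of $\mathbb{R}^{n+m+1}$. Along $M$ let $L:=N_1^{\perp}\subset NM$ be the orthogonal complement of $N_1$ in the normal bundle. Since $f$ is $1$-regular, $L$ is a smooth vector bundle of rank $m-p\geqslant 1$.

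\textbf{Step 1 ($L$ is parallel).} The normal connection $\nabla^{\perp}$ is metric. So if $N_1$ is $\nabla^{\perp}$-parallel, then for $\eta\in\Gamma(L)$ and $\xi\in\Gamma(N_1)$ we get
\[
\langle\nabla^{\perp}_X\eta,\xi\rangle=-\langle\eta,\nabla^{\perp}_X\xi\rangle=0 .
\]
Hence $\nabla^{\perp}_X\eta\in L$.

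\textbf{Step 2 ($L$ is constant in $\mathbb{R}^{n+m+1}$).} For $\eta\in\Gamma(L)$ the shape operator vanishes, because
\[
\langle A_\eta X,Y\rangle=\langle \operatorname{II}(X,Y),\eta\rangle=0 .
\]
Moreover $\langle\eta,f\rangle=0$, since $\eta$ is tangent to the sphere. The Weingarten formula in $\mathbb{R}^{n+m+1}$ then gives
\[
D_X\eta=-A_\eta X+\nabla^{\perp}_X\eta-\langle X,\eta\rangle f=\nabla^{\perp}_X\eta\in L .
\]
So $L$ is a $D$-parallel subbundle of the trivial bundle $M\times\mathbb{R}^{n+m+1}$. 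Since $M$ is connected, the fibres $L(x)=L_0$ all coincide with one fixed $(m-p)$-dimensional linear subspace $L_0\subset\mathbb{R}^{n+m+1}$.

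\textbf{Step 3 (the image lies in a small sphere, and that sphere is great).} Take $\eta\in\Gamma(L)$ with constant value $\eta_0\in L_0$. Then
\[
X\langle f,\eta_0\rangle=\langle X,\eta_0\rangle=0,
\]
because $\eta_0$ is normal to $M$. Hence $\langle f,\eta_0\rangle$ is constant, and it equals $0$ because $\eta$ is tangent to the sphere at $f(x)$. Therefore $f(M)\subset L_0^{\perp}\cap\mathbb{S}^{n+m}$, which is a great sphere $\mathbb{S}^{n+p}$. This is exactly the reduction of codimension to $p$.

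For $c=0$ the same argument gives $X\langle f,\eta_0\rangle=0$. The function $\langle f,\eta_0\rangle$ is then constant but not necessarily zero, so $f(M)$ lies in the affine subspace $f(x_0)+L_0^{\perp}$. For $c<0$ one uses the hyperboloid model in Minkowski space with the Lorentzian inner product, and the same computation applies verbatim.

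\textbf{Main point of care.} The step to watch is Step 2, namely verifying that every term of $D_X\eta$ other than $\nabla^{\perp}_X\eta$ vanishes. This uses three facts together: $A_\eta=0$, which holds precisely because $\eta\perp N_1$; the normal component $\langle X,\eta\rangle f$ coming from the sphere, which vanishes because $\eta$ is normal to $M$; and $1$-regularity, which makes $L$ a genuine smooth bundle so that parallel transport applies. The remaining steps are routine.
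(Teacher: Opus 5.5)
The paper gives no proof of this lemma: it is quoted from Erbacher and used once, with $Q=\mathbb{S}^{n+m}$, in the proof of Theorem~\ref{sec}. There is therefore nothing to compare your route against. Your argument is correct and is the standard linear-model proof. $L=N_1^{\perp}$ is $\nabla^{\perp}$-parallel because $\nabla^{\perp}$ is metric, and $A_\eta=0$ for $\eta\in\Gamma(L)$. The sphere term $-\langle X,\eta\rangle f$ also vanishes. Hence $L$ is parallel for the flat connection $D$, so it is a fixed subspace $L_0$, and $f(M)\subset L_0^{\perp}\cap\mathbb{S}^{n+m}\cong\mathbb{S}^{n+p}$.

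Two small remarks. First, in Step~3 no differentiation is needed when $c>0$: $\langle f(x),\eta_0\rangle=0$ holds pointwise, since $\eta_0\in L(x)\subset T_{f(x)}\mathbb{S}^{n+m}=f(x)^{\perp}$. The derivative is only needed for $c=0$, where the constant need not vanish, as you correctly note. Second, for $c<0$, ``verbatim'' should include one more observation. $L_0$ is spacelike, so $L_0^{\perp}$ is a Lorentzian subspace of dimension $n+p+1$, and it meets the hyperboloid in a totally geodesic $\mathbb{H}^{n+p}$.

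On scope: by identifying $Q^{n+m}_c$ with the model space, you prove the lemma for the simply connected complete space forms. The statement as written allows any Riemannian manifold of constant curvature $c$. This costs nothing here, because the paper only applies the lemma in $\mathbb{S}^{n+m}$, and Lemma~\ref{1984} already assumes the simply connected space form. For a general target, you would run the same argument through a local isometry to the model, or through the developing map on universal covers, and conclude using the connectedness of $M$.
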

Applying Lemma \ref{1984} and Lemma \ref{1971}, we obtain the following pinching theorem.
   \begin{thm}\label{sec}
   		Let $M^n$ be an $n$-dimensional closed minimal submanifold immersed in the unit sphere $\mathbb{S}^{n+m}$ with  flat normal bundle. If $  \lambda_{1}\leqslant n$, then $M^n$ must be one of the following:
   		\begin{enumerate}
   			\item[\rm (1)]   the great sphere;
   			\item[\rm (2)]  
  the product $\prod\limits_{i=1}^{r+1}\mathbb{S}^{n_{i}}(\sqrt{\frac{n_{i}}{n}})$, where $\sum\limits_{i=1}^{r+1}n_{i}=n$, $1\leqslant r\leqslant m$. Moreover, $M^n$ satisfies the following properties: 
   (i) $\lambda_{1}=n$; (ii) $S=rn$.
    		\end{enumerate}
   \end{thm}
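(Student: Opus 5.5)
Since the normal bundle is flat, $\rho^{\perp}_0\equiv 0$, so all shape operators $A^{\alpha}$ commute at each point and can be simultaneously diagonalized. The plan is to feed this into the Simons-type formula (\ref{Simons}) and integrate. Using $\|\mathcal{A}\|^2=\sum_i\lambda_i^2\leqslant\lambda_1 S$, the integrated formula (\ref{nablah}) gives
\begin{equation*}
0\leqslant\int_M|\nabla h|^2\,dM=\int_M\big(\|\mathcal{A}\|^2-nS\big)\,dM\leqslant\int_M S(\lambda_1-n)\,dM\leqslant 0 .
\end{equation*}
Hence all inequalities are equalities. First, $\nabla h\equiv 0$, so the second fundamental form is parallel. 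Second, at every point either $S=0$ or $\lambda_1=n$ together with $\sum\lambda_i^2=\lambda_1\sum\lambda_i$. The latter forces every nonzero eigenvalue of $\mathcal{A}$ to equal $\lambda_1=n$, so with $r$ nonzero eigenvalues we get $S=rn$. Because $h$ is parallel, $S$ is constant. If $S\equiv0$, $M$ is totally geodesic, i.e.\ the great sphere, which is case (1).

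Now assume $S=rn>0$ with $1\leqslant r\leqslant m$. Choose a normal frame diagonalizing $\mathcal{A}$, so that $A^{n+1},\dots,A^{n+r}$ are mutually orthogonal with $|A^{\alpha}|^2=n$ and $A^{\alpha}=0$ for $\alpha>n+r$. Choose also a tangent frame diagonalizing all $A^{\alpha}$ simultaneously (this uses flatness). The next step is to identify the geometry. The immersion is 1-regular because $\dim N_1=r$ is constant, since $h$ is parallel and the rank of $\mathcal{A}$ is constant. In the common eigenframe $\{E_i\}$ we have $\operatorname{II}(E_i,E_j)=0$ for $i\neq j$. The vectors $\operatorname{II}(E_i,E_i)$, $i=1,\dots,n$, satisfy the minimality relation $\sum_i\operatorname{II}(E_i,E_i)=0$, so each one is a linear combination of the others. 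Lemma \ref{1984} then shows $N_1$ is parallel, and Lemma \ref{1971} reduces the codimension to $r$. Thus $M\subset\mathbb{S}^{n+r}$ is minimal with flat normal bundle and parallel second fundamental form, and its $r$ normal directions span $N_1$.

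The remaining step, and the one I expect to be the main obstacle, is the classification. Parallel $h$ with commuting diagonal shape operators means that the tangent bundle splits into parallel distributions. These are the common eigenspaces, i.e.\ the distinct points $\operatorname{II}(E_i,E_i)\in N_1\cong\mathbb{R}^r$. Each eigenspace is parallel and integrable, with constant principal normal $\eta_a$. By Gauss' equation the leaves have constant curvature $1+|\eta_a|^2$, and mixed sectional curvatures $1+\langle\eta_a,\eta_b\rangle$ vanish because of the product structure given by de Rham. So $\langle\eta_a,\eta_b\rangle=-1$ for $a\neq b$, and minimality gives $\sum_a n_a\eta_a=0$. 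From these relations I would show there are exactly $r+1$ distinct vectors $\eta_a$, using that they span $\mathbb{R}^r$ and that the Gram conditions force affine independence. Solving the relations gives $|\eta_a|^2=(n-n_a)/n_a$, so each factor is $\mathbb{S}^{n_a}(\sqrt{n_a/n})$ and $M$ is $\prod_{a=1}^{r+1}\mathbb{S}^{n_a}(\sqrt{n_a/n})$. One should also verify consistency: computing $\mathcal{A}$ for this standard product, its nonzero eigenvalues all equal $n$ and $S=\sum_a n_a|\eta_a|^2=\sum_a(n-n_a)=rn$, which confirms properties (i) and (ii). Alternatively, one could cite the known classification of submanifolds with parallel second fundamental form and flat normal bundle in spheres as products of spheres, and then match the radii by the computation above.
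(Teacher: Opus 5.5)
Your proposal is correct and follows essentially the paper's route. You integrate the Simons-type formula with $\rho^{\perp}_0=0$ and $\|\mathcal{A}\|^2\leqslant\lambda_1 S$ to get $\nabla h\equiv 0$ and the eigenvalue dichotomy, use 1-regularity with Lemmas \ref{1984} and \ref{1971} to reduce the codimension to $r$, and then classify via the parallel common eigen-distributions, Gauss' equation and minimality, which is the Xu--Wang argument behind Theorem \ref{XW}. The minor differences are that you get $k=r+1$ from the rank of the linear relations among the $\eta_a$ rather than from $S=\sum_a n_a|\eta_a|^2=(k-1)n=rn$, and that your final step from the intrinsic local product of spheres to the extrinsic standard product still needs Moore's lemma (or the uniqueness part of the fundamental theorem of submanifolds), which the paper imports via Yau and Xu--Wang.
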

\begin{proof}
	Generally, we have Simons-type formula (\ref{Simons}):
\begin{equation*}
	\begin{aligned}
		\frac{1}{2}\Delta S=|\nabla h|^2+nS-\sum_{\alpha,\beta}|[A^{\alpha},A^{\beta}]|^2-\sum_{\alpha,\beta}|\langle A^{\alpha},A^{\beta}\rangle|^2.
	\end{aligned}
\end{equation*}
Since  the normal bundle of $M^n$ is flat, i.e., $\rho^{\perp}_0=\sum\limits_{\alpha,\beta}|[A^{\alpha},A^{\beta}]|^2=0$. 
Then integrating (\ref{Simons}) over $M$, we get
\begin{equation*}\label{inner 1}
	\begin{aligned}
		0\leqslant\int_{M}|\nabla h|^2dM&=\int_{M}-nS+\|\mathcal{A}\|^2dM\\
		&\leqslant\int_{M} -nS+S\lambda_{1}dM\\
		&=\int_{M}(\lambda_{1}-n)SdM.
	\end{aligned}
\end{equation*}
Therefore, if $0\leqslant\lambda_{1}\leqslant n$, then $S\equiv 0$ (just $\lambda_{1}\equiv 0$) or $\lambda_{1}\equiv n$. Meanwhile, $\forall i,j,k,\alpha$, $h_{ijk}^{\alpha}=0$ and there exists a positive integer $r$ such that 
\[\lambda_{1}=\lambda_{2}=\cdots=\lambda_{r} \quad {\text{and}} \quad  \lambda_{r+1}=\cdots=\lambda_{m}=0.\] 
There are two cases that need to be discussed:

Case (A).  For $\lambda_{1}\equiv 0$, we have $S\equiv 0$ and $M^n$ is the great sphere.

Case (B).  For  $\lambda_{1}\equiv n$,  one has $\lambda_{1}=\lambda_{2}=\cdots=\lambda_{r}=n$ and $\lambda_{r+1}=\cdots=\lambda_{m}=0$. Thus, $S\equiv rn$. The dimension of the first normal bundle $dim N_{1}=r$, which implies that the immersion is 1-regular. Since  the normal bundle of $M^n$ is flat, there exists a local orthonormal frame $\{e_{1}, e_{2},\ldots, e_{n}\}$, such that $h_{ij}^{\alpha}=\lambda_{i}^{\alpha}\delta_{ij}$ for every $\alpha$. Moreover, the functions $\lambda_{i}^{\alpha}$ are differentiable almost everywhere and globally continuous, and the local orthonormal frame exists and is differentiable almost everywhere. First, we have $h_{ij}^{\alpha}=0$ for $i\neq j$. Second, since $M^n$ is minimal, we get  $\sum\limits_{\alpha,i}h_{ii}^{\alpha}e_{\alpha}=0$, i.e., $\sum\limits_{\alpha}h_{ii}^{\alpha}e_{\alpha}=-\sum\limits_{\alpha}\sum\limits_{j\neq i}h_{jj}^{\alpha}e_{\alpha}$. Hence, by Lemma \ref{1984}, the first normal bundle $N_{1}$ is parallel. Applying Lemma \ref{1971}, the codimension of this immersion can be reduced to $r$. Specially, if $r=m$, then $M^n$ is full. Furthermore, since  the normal bundle of $M^n$ is flat, 
we can choose a local parallelizable othonormal normal frame so that $\omega_{\beta\alpha}=0$. Because $h_{ijk}^{\alpha}=0$ for all $i,j,k,\alpha$, then from the formula
\begin{equation}\label{cov}
	\sum_{k}h_{ijk}^{\alpha}\omega^{k}=dh_{ij}^\alpha+\sum_{l}h_{il}^{\alpha}\omega_{lj}+\sum_{l}h_{lj}^{\alpha}\omega_{li}+\sum_{\beta}h_{ij}^{\beta}\omega_{\beta\alpha},
\end{equation}
let $i=j$, we get
\begin{equation*}
	d\lambda_{i}^{\alpha}=0, \quad a.e.
\end{equation*}
Hence $\lambda_{i}^{\alpha}$ are constant almost everywhere. Then using formula (\ref{cov}) again, we conclude that 
\begin{equation*}
	(\lambda_{i}^{\alpha}-\lambda_{j}^{\alpha})\omega_{ij}=0.
\end{equation*} 
Therefore
\begin{equation}\label{key1}
	\omega_{ij}=0 \quad \text{whenever} \quad \lambda_{i}^{\alpha}\neq\lambda_{j}^{\alpha}.
\end{equation}
It follows that 
\begin{equation}\label{wedge}
	(\lambda_{i}^{\alpha}-\lambda_{j}^{\alpha})\omega_{ij}\wedge\omega_{j}=0,  \quad \forall i,j.
\end{equation} 
Without loss of generality, we assume that
\begin{equation*}
	\lambda_{1}^{\alpha}=\lambda_{2}^{\alpha}=\cdots=\lambda_{n_1}^{\alpha}>\lambda_{n_1+1}^{\alpha}=\cdots=\lambda_{n_2}^{\alpha}>\cdots>\lambda_{n_{k-1}+1}^{\alpha}=\cdots=\lambda_{n_k}^{\alpha}>\cdots=\lambda_{n}^{\alpha}.
\end{equation*}
Hence, combining with (\ref{wedge}), if $n_{k-1}<i\leqslant n_{k}$,
\begin{equation}\label{key2}
	d\omega_{i}=-\sum_{j=1}^{n}\omega_{ij}\wedge \omega_{j}=-\sum_{j=n_{k-1}+1}^{n_k}\omega_{ij}\wedge \omega_{j}.
\end{equation}
From (\ref{key1}) and (\ref{key2}) we see that $\omega_{j}=0$, $j\notin\{n_{k-1}+1,\ldots,n_{k}\}$ defines a totally geodesic foliation of $M^n$. Since all $\lambda_{i}^{\alpha}$ are constant, the leaves of this foliation are all closed and hence compact.
Varying $k$ yields different totally geodesic foliation with compact leaves. The leaves of these two foliations are mutually perpendicular to each other and equation (\ref{key1}) shows actually they give a decomposition of $M$. Thus $$M=M_{1}\times M_{2}\times\cdots\times M_{k}.$$
Since   the normal bundle of $M^n$ is flat, one has $h_{ij}^{\alpha}=\lambda_{i}^{\alpha}\delta_{ij}$ for every $\alpha$. Hence
\begin{equation}\label{Yau}
	h_{ij}^{\alpha}=0
\end{equation} for all $\alpha$ and $i\neq j$. Then following Yau's proof of Theorem 9 in \cite{Yau 1975}, considering the standard embedding of the sphere in Euclidean space, a lemma of J. D. Moore \cite{Moore}, and (\ref{Yau}) shows that each $M_{i}$ lies in a linear subspace $N_{i}$ and that the $N_{i}$'s are all mutually perpendicular, $k\leqslant r+1$. Similar to the proof of Theorem \ref{XW},  
 we can directly conclude that $$M=\mathbb{S}^{n_1}(\sqrt{\frac{n_1}{n}})\times \mathbb{S}^{n_2}(\sqrt{\frac{n_2}{n}})\times\cdots\times \mathbb{S}^{n_k}(\sqrt{\frac{n_k}{n}}),$$ where $\sum\limits_{i=1}^{k}n_{i}=n$. Since $S\equiv rn$, we have $k=r+1$. This completes the proof.
\end{proof}
\begin{proof}[\textbf{{Proof of Theorem \ref{thm S0n p=0}}}]
Combining Theorem \ref{main} with Theorem \ref{sec}, we complete the proof.
\end{proof}
\begin{proof}[\textbf{{Proof of Theorem \ref{spe}}}]
Since $S\leqslant n$, we have $\lambda_{1}\leqslant n$. Then by Theorem \ref{thm S0n p=0}, we conclude that either $S\equiv 0$ or $S=\lambda_{1}\equiv n$, which completes the proof of Theorem \ref{spe}.
\end{proof}
\begin{proof}[\textbf{Proof of Corollary \ref{constant}}]
	By (\ref{int}), if $\rho^{\perp}_0$ is constant, we obtain
	\begin{equation}\label{rho0const}
		\begin{aligned}
		0\leqslant&\int_{M}|\nabla h|^2+\big(\lambda_{1}+2S-n\big)\sqrt{\rho^{\perp}_0}dM\\
		=&\int_{M}\rho^{\perp}_0-nS+\|\mathcal{A}\|^2+\big(\lambda_{1}+2S-n\big)\sqrt{\rho^{\perp}_0}dM\\
		\leqslant&\int_{M}\rho^{\perp}_0-nS+S^2+(3S-n)\sqrt{\rho^{\perp}_0}dM\\
		=&\int_{M}\rho^{\perp}_0+(3S-n)\sqrt{\rho^{\perp}_0}+S(S-n)dM\\
		=&\rho^{\perp}_0\vol(M)+\sqrt{\rho^{\perp}_0}\int_{M}(3S-n)dM+\int_{M}S(S-n)dM.
		\end{aligned}
	\end{equation}
Since $\sqrt{\rho^{\perp}_0}$ is the solution of the following inequality in $x$:
\[x^2\vol(M)+x\int_{M}(3S-n)dM+\int_{M}S(S-n)dM\geqslant0,\]if $\delta=\left(\int_{M}(3S-n) dM\right)^2-4\vol(M)\int_{M}S(S-n)dM\geqslant 0$, then solving the quadratic inequality gives
	\begin{equation*}
	\sqrt{\rho^{\perp}_0}\leqslant\frac{\int_{M}(n-3S)dM-\sqrt{\delta}}{2\vol(M)} \quad  \text{or} \quad \sqrt{\rho^{\perp}_0}\geqslant\frac{\int_{M}(n-3S)dM+\sqrt{\delta}}{2\vol(M)}.
\end{equation*} 
By (\ref{rel}), we obtain
	\begin{equation*}
	\rho^{\perp}\leqslant\frac{\int_{M}(n-3S)dM-\sqrt{\delta}}{2n(n-1)\vol(M)} \quad \text{or} \quad \rho^{\perp}\geqslant\frac{\int_{M}(n-3S)dM+\sqrt{\delta}}{2n(n-1)\vol(M)}.
\end{equation*} 
\end{proof}
\begin{proof}[\textbf{Proof of Corollary \ref{Sconstant}}]
If $S$ is also constant, then by Corollary \ref{constant}, we obtain
\begin{equation*}
	\delta=\vol(M)^2\left((3S-n)^2-4S(S-n)\right)=\vol(M)^2(5S^2-2nS+n^2)>0.
\end{equation*}
By Corollary \ref{LL}, one has $S\geqslant\dfrac{2n}{3}$ if $S>0$.
Hence, we derive that
\begin{equation*}
		\rho^{\perp}\leqslant\frac{n-3S-\sqrt{5S^2-2nS+n^2}}{2n(n-1)}<0
 \quad \text{or} \quad
\rho^{\perp}\geqslant \frac{n-3S+\sqrt{5S^2-2nS+n^2}}{2n(n-1)}.
\end{equation*}
Since $\rho^{\perp}$ is nonnegative, the former case is impossible.
Notice that $\frac{n-3S+\sqrt{5S^2-2nS+n^2}}{2n(n-1)}\geqslant 0$ if and only if $0\leqslant S\leqslant n$. Hence, if $0\leqslant\rho^{\perp}\leqslant \frac{n-3S+\sqrt{5S^2-2nS+n^2}}{2n(n-1)}$, which in particular implies $0\leqslant S\leqslant n$, then $\rho^{\perp}=\frac{n-3S+\sqrt{5S^2-2nS+n^2}}{2n(n-1)}$. Consequently, the equalities hold in (\ref{rho0const}), yielding $$\lambda_{1}=S \quad \text{and} \quad|\nabla h|^2+\big(\lambda_{1}+2S-n\big)\sqrt{\rho^{\perp}_0}=0.$$ 
If $S\equiv 0$, then $M^n$ is the great sphere. If $S>0$, then by Corollary \ref{LL}, we have $S\geqslant \dfrac{2n}{3}$, thus $\lambda_{1}+2S-n=3S-n\geqslant n$, it follows that $\nabla h=0$ and $\rho_{0}^{\bot}\equiv 0$, which implies the normal bundle of $M^n$ is flat. Therefore, by Theorem \ref{spe}, $M^n$ must be either the great sphere or the Clifford torus.
\end{proof}

\section{Appendix}
In this section, we state the following theorem and provide a sketch of its proof, as given by Xu and Wang \cite{X-W}.
\begin{thm}[\cite{X-W}]\label{XW}
	Let $M^n$ be a connected smooth n-dimensional Riemannian manifold, it minimally immersed in a unit $(n+m)$-sphere $\mathbb{S}^{n+m}$. If the normal bundle of $M$ is flat and for each $\alpha$, the corresponding second fundamental form $A^{\alpha}$ is covariant constant over $M$, then $(M,\phi)$ is an open submanifold of one of the minimal products of spheres:$$\mathbb{S}^{n_1}(\sqrt{\frac{n_1}{n}})\times \mathbb{S}^{n_2}(\sqrt{\frac{n_2}{n}})\times\cdots\times \mathbb{S}^{n_k}(\sqrt{\frac{n_k}{n}})$$ with $\sum\limits_{i=1}^{k}n_{i}=n$.
\end{thm}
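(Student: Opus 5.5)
The plan is to simultaneously diagonalize the shape operators and read off the geometry from $\nabla h=0$. Since the normal bundle is flat, $R_{\alpha\beta kl}=0$ for all indices, i.e.\ all $A^{\alpha}$ commute. Hence near any point we can choose a local orthonormal tangent frame $\{e_i\}$ with $h^{\alpha}_{ij}=\lambda^{\alpha}_i\delta_{ij}$ for every $\alpha$. Flatness also gives a local parallel orthonormal normal frame, so $\omega_{\alpha\beta}=0$. Then $\nabla A^{\alpha}=0$ combined with the covariant derivative formula yields two facts. Taking $i=j$ gives $d\lambda^{\alpha}_i=0$, so the principal curvatures are constant. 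Taking $i\neq j$ gives $(\lambda^{\alpha}_i-\lambda^{\alpha}_j)\omega_{ij}=0$.

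Next I would group the indices by the \emph{vector} of principal curvatures $\xi_i:=\sum_\alpha\lambda^{\alpha}_i e_\alpha$, so that $i\sim j$ iff $\xi_i=\xi_j$. This produces distinct vectors $\xi^{(1)},\dots,\xi^{(k)}$ with multiplicities $n_1,\dots,n_k$. Since $\omega_{ij}=0$ whenever $i\not\sim j$, each eigendistribution $D_a$ is parallel. By the local de Rham decomposition, $M$ is locally a Riemannian product $M_1\times\cdots\times M_k$. By the Gauss equation, $M_a$ has sectional curvature $1+|\xi^{(a)}|^2$, constant and positive. Moreover, minimality reads $\sum_a n_a\xi^{(a)}=0$. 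For $a\neq b$, mixed sectional curvatures vanish: $1+\langle\xi^{(a)},\xi^{(b)}\rangle=0$.

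For the extrinsic step, view $\phi:M\to\mathbb{S}^{n+m}\subset\mathbb{R}^{n+m+1}$ and apply Moore's lemma. The Euclidean second fundamental form of $M$ in $\mathbb{R}^{n+m+1}$ is $\xi_i\delta_{ij}-\phi\,\delta_{ij}$. Its mixed components between $D_a$ and $D_b$ vanish, so Moore's lemma splits the immersion as a product $\phi=\phi_1\times\cdots\times\phi_k$ into mutually orthogonal subspaces $\mathbb{R}^{n+m+1}=\bigoplus N_a$, up to translation. Each $\phi_a$ is umbilic with constant mean curvature vector $\xi^{(a)}-\phi$. A connected umbilic submanifold of Euclidean space of dimension $\geq 1$ is an open piece of a round sphere, so $\phi_a(M_a)\subset\mathbb{S}^{n_a}(r_a)$, where $r_a^{-2}=1+|\xi^{(a)}|^2$. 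Because $\phi$ lies in the unit sphere, the centers may be taken at the origin and $\sum r_a^2=1$.

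Finally, I would determine the radii from minimality. The position vector splits as $\phi=\sum_a\phi_a$, with $\phi_a\perp\phi_b$ and $|\phi_a|=r_a$. The mean curvature vector of the product in $\mathbb{S}^{n+m}$ is the tangential projection of $-\sum_a n_a\phi_a/r_a^2$ onto the normal space of the sphere. Its vanishing forces $n_a/r_a^2=n$ for all $a$, i.e.\ $r_a=\sqrt{n_a/n}$. Thus $(M,\phi)$ is an open subset of $\prod_a\mathbb{S}^{n_a}(\sqrt{n_a/n})$.

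The main obstacle will be the careful use of Moore's lemma. One has to check that its hypothesis is satisfied by the Euclidean second fundamental form, including the $-\phi\,\delta_{ij}$ term, and that the resulting factors really are spheres centered at the origin. I would handle this by checking directly that $\phi_a$ satisfies $\nabla^{\mathbb{R}}_X(\phi_a - c_a)$-type identities; concretely, I would show that $\phi_a+r_a^2(\xi^{(a)}-\phi)_a$ is constant.
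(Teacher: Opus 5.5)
Your proof is correct. Up to the relations $1+\langle\xi^{(a)},\xi^{(b)}\rangle=0$ for $a\neq b$ and $\sum_a n_a\xi^{(a)}=0$, it is essentially the paper's sketch.

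After that the routes diverge. The paper stays intrinsic. It writes these relations as $B^{T}B\,(n_1,\dots,n_k)^{T}=0$ and obtains $|\xi^{(a)}|^2=\frac{n}{n_a}-1$, so each leaf has constant curvature $\frac{n}{n_a}$. It then simply cites Xu--Wang for the immersion into $\prod_a\mathbb{S}^{n_a}(\sqrt{n_a/n})$.

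You carry out the extrinsic step yourself. You use Moore's lemma for $M\to\mathbb{R}^{n+m+1}$ (the tool the paper itself uses, following Yau, in the proof of Theorem~\ref{sec}), identify each factor explicitly as a round sphere, and read off the radii from minimality of the product. So your version proves what the sketch defers to the reference. The paper's linear algebra, in exchange, pins down the radii without first knowing that the immersion splits.

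Three places need tightening.

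First, for $n_a=1$ umbilicity is vacuous, so your planned direct check is genuinely needed. It works. With a parallel normal frame, $D_X\xi^{(a)}=-A_{\xi^{(a)}}X$. The mixed relation then gives $D_X(\xi^{(a)}-\phi)=-(1+|\xi^{(a)}|^2)X_a$, where $X_a$ is the $D_a$-component of $X$. Hence $c_a=\phi_a+r_a^2(\xi^{(a)}-\phi)$ is constant.

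Second, $|\phi|=1$ alone does not put the centers at the origin; a small hypersphere of $\mathbb{S}^{n+1}$ is a counterexample. Minimality is what does it. The $N_a$-component of $-\sum_b n_b(\phi_b-c_b)/r_b^2=-n\phi$ reads $(n-n_a/r_a^2)\phi_a+(n_a/r_a^2)c_a=0$. Since $d\phi_a(X)=X_a$, $\phi_a$ is nonconstant, which forces $r_a^2=n_a/n$ and $c_a=0$. Alternatively, the paper's identity $|\xi^{(a)}|^2=\frac{n}{n_a}-1$ gives $\sum_a c_a=(1-\sum_a r_a^2)\phi+\frac1n\sum_a n_a\xi^{(a)}=0$, hence each $c_a=0$.

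Third, de Rham and Moore are only applied on product neighbourhoods, so you need a connectedness step. The subspaces $d\phi(D_a)\oplus\mathbb{R}(\xi^{(a)}-\phi)$ are preserved under differentiation, hence constant on the connected $M$. Therefore one and the same product of spheres is obtained everywhere.
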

\begin{proof}[\textbf{Sketch of Proof}]
	Fix a point $q\in M$. Since the normal bundle of $M$ is flat, we can choose a local orthonormal base $\{e_{1}, e_{2},\ldots, e_{n}\}$ of $T_{q}M$ such that $h_{ij}^{\alpha}=\lambda_{i}^{\alpha}\delta_{ij}$ for every $\alpha$. Define an equivalence by $e_{i}\sim e_{j}$ if and only if $\lambda_{i}^{\alpha}=\lambda_{j}^{\alpha}$ for all $\alpha$. This divides $\{e_{1}, e_{2},\ldots, e_{n}\}$ into equivalence classes $E_{1},\ldots,E_{k}$. Denote by $n_{i}$ the number of the elements in $E_{i}$. Because $A^{\alpha}$ is covariant constant over $M$, then by paralleling moving $T_{q}M$ along a curve, we can obtain $k$ distributions $\{\Gamma_{i}\}$. Let $X\in \Gamma_{i}, Y \in T(M)$, for an arbitrary point $p\in M$, $\gamma:[0,1]\rightarrow M$ be a piecewise differential curve with $\gamma(0)=p,\gamma'(0)=Y$. By the definition of the covariant derivative $\nabla_{Y}X$, it follows that $\nabla_{Y}X\in \Gamma_{i}$. Consequently, if $X, Y\in \Gamma_{i}$, we have $[X,Y]=\nabla_{X}Y-\nabla_{Y}X\in \Gamma_{i}$. Thus, $\Gamma_{1},\ldots,\Gamma_{k}$ are involutive and the integral leaves of $\Gamma_{i}$ are totally geodesic submanifolds of $M$.
	
	Suppose $e_{l},e_{m}$ belong to two different classes, $E_{i}$ and $E_{j}$ respectively. From the proof of $\Gamma_{i}$'s involutive, we know $\langle R(e_{l},e_{m})e_{l},e_{m}\rangle=0$. By Gauss equation, this yields
	\[0=R_{lmlm}=1+\sum_{\alpha}\lambda_{l}^{\alpha}\lambda_{m}^{\alpha}.\]
	Denote by $B$ the $(m\times k)$-matrix
	\begin{equation*}
	B=\begin{pmatrix}
			\lambda_{1}^{n+1}&\lambda_{2}^{n+1}&\cdots&\lambda_{k}^{n+1}\\
			\lambda_{1}^{n+2}&\lambda_{2}^{n+2}&\cdots&\lambda_{k}^{n+2}\\
			\cdots&\cdots&\cdots&\cdots\\
			\lambda_{1}^{n+m}&\lambda_{2}^{n+m}&\cdots&\lambda_{k}^{n+m}\\
		\end{pmatrix}.
	\end{equation*}
Since $M$ is minimal, we have $\sum\limits_{i=1}^{k}\lambda_{i}^{\alpha}n_{i}=0$ for each $\alpha$, i.e.
\begin{equation*}
	B\begin{pmatrix}
		n_{1}\\\vdots\\ n_{k}
	\end{pmatrix}=0.
\end{equation*}
Then
\begin{equation*}
	B^{T}B\begin{pmatrix}
		n_{1}\\\vdots\\ n_{k}
	\end{pmatrix}=\begin{pmatrix}
		\sum\limits_{\alpha}(\lambda_{1}^{\alpha})^2&-1&\cdots&-1\\
		-1&\sum\limits_{\alpha}(\lambda_{2}^{\alpha})^2&\cdots&-1\\
		\cdots&\cdots&\cdots&\cdots\\
		-1&-1&\cdots&\sum\limits_{\alpha}(\lambda_{k}^{\alpha})^2\\
	\end{pmatrix}\begin{pmatrix}
	n_{1}\\\vdots\\ n_{k}
\end{pmatrix}=0.
\end{equation*}
A direct computation yields $\sum\limits_{\alpha}(\lambda_{i}^{\alpha})^2=\frac{n}{n_{i}}-1$, $i=1,\ldots,k$. Therefore, if $e_{m_{1}}, e_{m_{2}}$ both belong to $E_{i}$, we obtain
\[R_{m_{1}m_{2}m_{1}m_{2}}=1+\sum\limits_{\alpha}(\lambda_{i}^{\alpha})^2=\frac{n}{n_{i}}.\]
Thus, the integral manifold of $\Gamma_{i}$ has a constant sectional curvature $\dfrac{n}{n_{i}}$. Furthermore, by some discussion  in \cite{X-W}, there exists an isometric immersion:
\[\phi:M\rightarrow \mathbb{S}^{n_1}(\sqrt{\frac{n_1}{n}})\times \mathbb{S}^{n_2}(\sqrt{\frac{n_2}{n}})\times\cdots\times \mathbb{S}^{n_k}(\sqrt{\frac{n_k}{n}}).\]
\end{proof}
	
\end{document}